\pgfplotsset{compat=1.12}
\newtheorem{theoremx}{Theorem}
\newtheorem{theorem}{Theorem}[section]
\newtheorem{corollary}[theorem]{Corollary}
\newtheorem{lemma}[theorem]{Lemma}
\newtheorem{proposition}[theorem]{Proposition}
\theoremstyle{definition}
\newtheorem{definition}[theorem]{Definition}
\newtheorem{notation}[theorem]{Notation}
\newtheorem{example}[theorem]{Example}
\newtheorem{conjecture}[theorem]{Conjecture}
\newtheorem{remark}[theorem]{Remark}
\numberwithin{equation}{subsection}
\DeclareMathOperator{\fpt}{fpt}
\newcommand{\lct}{{\operatorname{lct}}}
\newcommand{\set}[1]{\left\{ #1 \right\}}
\newcommand{\CC}{\mathbb{C}}
\newcommand{\ZZ}{\mathbb{Z}}
\newcommand{\QQ}{\mathbb{Q}}
\newcommand{\FF}{\mathbb{F}}
\newcommand{\RR}{\mathbb{R}}
\newcommand{\m}{\mathfrak{m}}
\newcommand{\fa}{\mathfrak{a}}
\newcommand{\fb}{\mathfrak{b}}
\newcommand{\SP}{\mathcal{P}}
\newcommand{\N}{\mathcal{N}}
\newcommand{\Bf}{\boldsymbol{f}}
\newcommand{\Card}{\operatorname{Card}}
\newcommand{\E}{\mathcal{E}}
\newcommand{\tE}{\operatorname{\widehat{\E}}}
\newcommand{\Supp}{\operatorname{Supp}}
\newcommand{\Vol}{\operatorname{Vol}}
\newcommand{\V}{\operatorname{V}}
\begin{document}
	
\title{$F$-pure thresholds and $F$-Volumes of some non-principal ideals}
	
\author[W. Badilla-C\'espedes]{W\'agner Badilla-C\'espedes$^{1}$}
\address{Centro de Ciencias Matemáticas, UNAM, Campus Morelia, Morelia, Michoacán, México.}
\email{wagner@matmor.unam.mx}

\author[E. Le\'on-Cardenal]{Edwin Le\'on-Cardenal$^{2}$}
\address{Departamento de Matemáticas y Computación, Universidad de La Rioja\\
	C. Madre de Dios, 53, 26006, Logro\~no, Spain.}
\email{edwin.leon@unirioja.es}
\thanks{$^{1}$ The first author was supported by UNAM Posdoctoral Program (POSDOC) and by SNII (CONAHCYT, México). $^{2}$ The second author is partially supported by PID2020-114750GB-C31, funded by MCIN/AEI/10.13039/501100011033 and the Departamento de Ciencia, Universidad y Sociedad del Conocimiento of the	Gobierno de Arag\'on (Grupo de referencia ``$\acute{A}$lgebra y Geometr\'ia'') E22\_20R. The second author was also partially supported by the European Union NextGenerationEU/PRTR via Maria Zambrano's Program. Both authors were partially supported by CONAHCYT project CF-2023-G33.}

\subjclass[2020]{Primary 13A35; Secondary 14B05, 14M25.}
\keywords{$F$-pure thresholds, $F$-volumes, non-principal ideals, Newton polyhedra, splitting polytopes.}

\begin{abstract}
	We compute the $F$-pure threshold of some non-principal ideals which satisfy a geometric generic condition about their Newton polyhedron. We also contribute some evidence in favor of the conjectured equality between the $F$-pure threshold and the log canonical threshold of ideals for infinitely many primes $p$. These results are obtained by generalizing the theory of splitting polytopes to the case of ideals. As an application of our results we obtain geometric lower bounds for the recently introduced $F$-volume of a collection of ideals.
\end{abstract}
	
\maketitle
	
\section{Introduction}
The problem of quantifying singularities of an algebraic variety $X$ at a given point $x$  can be approached in several ways, but the first thing to consider is the characteristic of the ground field. Over fields of characteristic zero it is common to use the log canonical threshold of $X$ at the point $x$, $\lct_x(X)$, which is a numerical invariant that can be defined analytically (via integration when available, e.g. over $\RR$ or $\CC$) or algebro-geometrically (via resolution of singularities). Over fields of prime characteristic, the corresponding numerical invariant is called the $F$-pure threshold of $X$ at the point $x$; it is denoted by $\fpt_x(X)$ and its definition uses the Frobenius map. While defined in an entirely different way, it turns out that both invariants share several properties and moreover, if $X_p$ denotes the reduction of the
variety $X$ to characteristic $p$, it is known that $\fpt_x(X_p)$ approaches $\lct_x(X)$ as long as $p$ increases. Furthermore, there is a largely open conjecture asserting that $\fpt_x(X_p)=\lct_x(X)$ for infinitely many primes $p$.

For definiteness, consider a polynomial ring $R$ over a field $\mathbb{k}$ of positive characteristic $p$. The $F$-pure threshold (at the origin) of an ideal $\fa\subseteq\m$ is defined as the limit when $e\to \infty$ of $p^{-e}(\max \{c \in \mathbb{Z}_{\geq 0} \mid \mathfrak{a}^c \not \subseteq \m^{[p^{e}]}\})$. Here $\m$ denotes the homogeneous maximal ideal of $R$ and $\m^{[p^{e}]}$ is the $p^{e}$-th Frobenius power of $\m$. This definition was given by 
Takagi and Watanabe \cite{TW2004} and since those days many properties and connections with other objects have been elucidated as it is summarized in the surveys \cite{BFS2013,TaWa2018} and the references therein. However it is acknowledged by the community that the task of computing $\fpt_x(X)$ is hard in general and few examples of explicit calculations are known, among them \cite{ShTa,HDbinomial,MiSiVa,BhSi15,HDpolynomials,HNBW2016,Gil18,Mu18,Gil22,Tri,GVJVNB,SmVra}. In fact, the first computational routine for testing some examples was developed just a few years ago~\cite{MacFrobThre}.
 
Another serious difficulty that emerges when it comes to computing explicit examples of $F$-pure thresholds is that the existing methods are designed mostly for the case of principal ideals; the situation in this regard is similar to the one for test ideals \cite{ScTu2014}. To the best of our knowledge the more general results in this direction are due to Shibuta and Takagi \cite{ShTa}, who present several explicit calculations of 
$F$-pure thresholds of binomial ideals by using linear programming. The authors generalize there the notable case of monomial ideals that can be deduced from the work of Howald on multiplier ideals \cite{How}. In this work we attack the case of non-principal ideals by extending the techniques of splitting polytopes developed by Hern\'andez \cite{HDbinomial,HDpolynomials}. The splitting polytope $\SP_f$ of a polynomial $f\in R$ is a compact convex polytope that resembles the more classical notion of the Newton polyhedron. In fact there are several useful connections between these two objects, allowing one to formulate some algebro-geometric conditions of the hypersurface defined by $f$ in terms of $\SP_{f}$. By using the definition of the Newton polyhedron of a polynomial mapping $\Bf=(f_1,\ldots,f_t)$ \cite{VeZu,LeVeZu} we define and study a splitting polytope $\SP_{\Bf}$ for an ideal $\fa=(f_1,\ldots,f_t)$. The term ideal $\fa^\circ$ of a non necessarilly principal ideal is a monomial ideal that has been used to study for example colength of ideals \cite{BiFuSa}. In our first main result we determine some geometric conditions under which  $\fpt(\fa)$ is equal to $\fpt(\fa^\circ)$. In this way we provide an extensive list of new examples of $F$-pure thresholds for non-principal ideals that moreover can be computed explicitly, see Example~\ref{Ex:Fthrsholds} and Proposition~\ref{Prop:Properties ftt ideals}(2). Given a prime number $p$ and $\rho\in(0,1]$,  $\rho^{(e)}$ denotes the $e$-th term in the $p$-expansion of $\rho$, while $\langle \rho \rangle_e$ is the $e$-th truncation of such expansion. Moreover, for $\rho\in\RR^l$, the symbol $|\rho|$ denotes the sum of all the entries of $\rho$. 

\begin{theoremx}[{Theorem \ref{Thm:F-pure-threshold-polytope}}]\label{Thm: first main theorem}
		Let $\fa \subseteq \m$ be a nonzero ideal with term ideal $\fa^\circ$, and take a list $\Bf=(f_1,\ldots,f_t)$  of minimal generators of $\fa$. Suppose that $\SP_{\Bf}$ has a unique maximal point $\rho=(\rho_1, \ldots, \rho_t)$ with $\rho_i \in \RR^{l_i}$ and set for $i\in\{1,\ldots,t\}$,
		$$S_i=\sup \left\{\ell\in\mathbb{Z}_{\geq 0} \mid \sum_{j=1}^{l_i} \rho_{i,j}^{(e)}  \leq p-1\quad\text{for every}\quad 0 \leq e \leq \ell  \right\}.$$ 
		If $I$ denotes the set of indices for which $S_i$ is finite, then the following assertions hold.
		\begin{enumerate}
			\item If $I= \emptyset$ then $\fpt(\fa)=\fpt(\fa^\circ)$. 
			\item If $I \ne \emptyset$ then 
			$$\fpt(\fa) \geq \sum_{i \in \{1,\ldots,t\} \setminus I}|\rho_i| + \sum_{i \in I}\left(|\langle \rho_i \rangle_{S_i}|+\frac{1}{p^{S_i}}\right).$$
		\end{enumerate}
\end{theoremx}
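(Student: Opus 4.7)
The plan is to split the argument into the (easy) upper bound and the (genuinely new) lower bound. The containment $\fa \subseteq \fa^\circ$, which holds because every element of $\fa$ decomposes as a sum of its monomial terms and each such term belongs to $\fa^\circ$ by definition, gives $\fa^c \subseteq (\fa^\circ)^c$ for every $c$, and hence $\fpt(\fa)\leq \fpt(\fa^\circ)$. For the monomial ideal $\fa^\circ$, the splitting-polytope formula (presumably established earlier in the paper, generalizing Howald / Hern\'andez) delivers $\fpt(\fa^\circ)=|\rho|$ under the uniqueness hypothesis on the maximal point. Thus in case (1) it suffices to produce the matching lower bound $\fpt(\fa)\geq |\rho|$, and in case (2) the weaker lower bound stated in the theorem.

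The lower bound is obtained by extending Hern\'andez's splitting-polytope witness from the principal case to the multi-generator case. For each $e\gg 0$, set $N_i^{(e)}:=p^{e}\langle \rho_i\rangle_{\min(e,S_i)}\in\ZZ_{\geq 0}^{l_i}$ and form the product
\[
g_e \;:=\; \prod_{i=1}^{t} f_i^{\,|N_i^{(e)}|} \;\in\; \fa^{c_e},
\qquad c_e \;:=\; \sum_{i=1}^{t}|N_i^{(e)}|.
\]
Expanding each factor $f_i^{|N_i^{(e)}|}$ by the multinomial theorem writes $g_e$ as a sum over distributions $\nu_i$ of $|N_i^{(e)}|$ among the monomials of $f_i$, each term being a product of multinomial coefficients $\binom{|N_i^{(e)}|}{\nu_i}$ times a monomial $x^{\beta}$ in the underlying polynomial ring. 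The distinguished choice $\nu_i = N_i^{(e)}$ isolates one such monomial, and I want to show this term survives modulo~$p$ and has every coordinate $\beta_k<p^e$, so that $x^{\beta}\notin\m^{[p^e]}$ and therefore $g_e\notin\m^{[p^e]}$.

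The nonvanishing of $\binom{|N_i^{(e)}|}{N_i^{(e)}}$ in characteristic~$p$ is exactly Lucas' theorem applied digit-by-digit: it holds precisely when $\sum_j \rho_{i,j}^{(e')}\leq p-1$ for all $0\leq e'\leq \min(e,S_i)$, and this is the very content of the definition of $S_i$. The bound $\beta_k<p^e$ is a consequence of $\rho\in\SP_{\Bf}$: the defining inequalities of the splitting polytope force the weighted combination of monomial exponents to satisfy $\sum_{i,j}\rho_{i,j}\alpha_{i,j,k}\leq 1$ for each variable $x_k$, so after scaling by $p^e$ one gets $\beta_k\leq p^e-1$. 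Dividing $c_e$ by $p^e$ and letting $e\to\infty$ yields $\fpt(\fa)\geq |\rho|$ in case (1). In case (2), the truncations for $i\in I$ stop at level $S_i$; to squeeze out the additional summand $1/p^{S_i}$, one multiplies in one further factor of $f_i$ and pigeonholes a single monomial from it, contributing $p^e/p^{S_i}$ to $c_e$ without breaking either the carry-free condition at level $S_i$ or the polytope inequalities.

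The main obstacle will be the bookkeeping in case (2): verifying that after truncating at $S_i$ and adding the correction factor, the combined multinomial coefficient remains nonzero modulo~$p$ and the witness monomial still lies outside $\m^{[p^e]}$ simultaneously. The uniqueness of the maximal point $\rho$ rigidifies the choice of distribution $\nu_i$ and is what makes the two conditions compatible; controlling the lower-order error coming from the truncation and the extra factor is where the technical work concentrates.
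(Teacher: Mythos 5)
Your outline of case (1) is essentially the paper's argument: upper bound from $\fa\subseteq\fa^\circ$ and $\fpt(\fa^\circ)=|\rho|$ (Proposition \ref{Prop:Properties ftt ideals}), lower bound from the witness monomial $x^{p^e\E_{\Bf}\langle\rho\rangle_e}$ in $\prod_i f_i^{p^e|\langle\rho_i\rangle_e|}$ together with Lucas and the polytope inequalities. Be aware, though, that your phrase ``the distinguished choice $\nu_i=N_i^{(e)}$ isolates one such monomial'' is not automatic: several distributions of exponents (possibly mixing monomials of different $f_i$'s) could produce the same total exponent and cancel mod $p$. Ruling this out is exactly the content of Lemma \ref{equal-vector} and Proposition \ref{propoBinomio}, which use the uniqueness of the maximal point \emph{and} the fact that the exponent vector is (componentwise at most) a truncation $\langle\rho\rangle_e$; this rigidity step needs to be proved, not just invoked.

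The genuine gap is your mechanism for case (2). You propose to gain the extra $1/p^{S_i}$ by multiplying in $f_i^{p^{e-S_i}}$ and ``pigeonholing a single monomial from it.'' Concentrating splitting-polytope mass $1/p^{S_i}$ on a single column of $\tE_i$ is not permissible in general: the slack left in that direction after truncating is $\rho_{i,j}-\langle\rho_{i,j}\rangle_{S_i}$, which is typically strictly smaller than $1/p^{S_i}$, so the exponent vector leaves the scaled polytope and the witness monomial falls into $\m^{[p^e]}$. The paper's own Example \ref{Example fptIdeal} defeats the recipe: for $p=2$, $f_1=x^2+xy^2$, $\rho_1=(1/3,1/3)$, one has $S_1=1$ and $\langle\rho_1\rangle_1=(0,0)$, and pigeonholing $f_1^{p^{e-1}}$ on $x^2$ yields $x^{p^e}$ while pigeonholing on $xy^2$ yields $x^{p^{e-1}}y^{p^e}$ --- both already in $\m^{[p^e]}$. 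Moreover such a concentrated vector is no longer $\preceq\langle\rho\rangle_{S+e}$, so Proposition \ref{propoBinomio}(2) (no cancellation) is unavailable as well. The correct construction, as in the proof of Theorem \ref{Thm:F-pure-threshold-polytope}, distributes the level-$(S_i+1)$ digit among the monomials of $f_i$ via integers $n_{i,j}\leq\rho_{i,j}^{(S_i+1)}$ with $n_{i,1}+\cdots+n_{i,l_i}=p-1$ (possible precisely because that digit sum is at least $p$), keeps one strict inequality so a tail of $(p-1)$-digits can be appended in that slot while staying $\preceq\langle\rho_i\rangle_{S_i+e}$, and thereby only realizes $|\langle\rho_i\rangle_{S_i}|+\frac{p^e-1}{p^{S_i+e}}$ at each finite stage, recovering the stated $1/p^{S_i}$ only in the limit $e\to\infty$; your plan of achieving exactly $p^{e-S_i}$ extra at a single finite level cannot work as stated.
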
 

On our second main result we contributed several cases in favor of a famous conjecture of Musta\c{t}\u{a}, Tagaki, and Watanabe \cite[Conjecture 3.6]{MTW} asserting  that there are infinitely many primes $p$ for which the $F$-pure threshold of the reduction mod $p$ of $\fa=(f_1,\ldots,f_t)$, $\fa_p$, equals the log canonical threshold of $\fa$. 
\begin{theoremx}[{Theorem \ref{Thm:fpt-equals-lct}}]
	Let $\fa \subseteq \m$ be a nonzero ideal with term ideal $\fa^\circ$, and take a list $\Bf=(f_1,\ldots,f_t)$  of minimal generators of $\fa$. Suppose that $\SP_{\Bf}$ has a unique maximal point $\rho=(\rho_1, \ldots, \rho_t)$ with $\rho_i \in \RR^{l_i}$. 
	\begin{enumerate}
		\item Assume that $\rho$ verifies the following property, whenever there is one $k_0\in\{1,\ldots,t\}$ with $|\rho_{k_0}|>1$, then $|\rho_k|>1$ for every $k\in\{1,\ldots,t\}$. If $\lct(\fa^\circ)>t$, then $\fpt(\fa_p)=\lct(\fa)=t$ for an infinite set of primes. 
		\item Assume that $\rho$ verifies the following property, whenever there is one $k_0\in\{1,\ldots,t\}$ with $|\rho_{k_0}|\leq 1$, then $|\rho_k|\leq 1$ for every $k\in\{1,\ldots,t\}$. Assume moreover that the set of prime numbers for which $\rho_k$ adds without carrying for every $k\in\{1,\ldots,t\}$ is an infinite set. If $\lct(\fa^\circ)\leq t$, then $\fpt(\fa_p)=\lct(\fa)$ for an infinite set of primes.
	\end{enumerate}
\end{theoremx}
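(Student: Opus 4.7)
My plan is to combine the universal upper bound $\fpt(\fa_p)\le\lct(\fa)$ with three standard ingredients: the polytope lower bound established in Theorem~\ref{Thm:F-pure-threshold-polytope}, Howald's characteristic-free formula $\fpt(\fa^\circ_p)=\lct(\fa^\circ)$ for the monomial ideal $\fa^\circ$, and the containment $\fa\subseteq\fa^\circ$, which gives $\lct(\fa)\le\lct(\fa^\circ)$. Under the standing hypotheses one also has the identification $\lct(\fa^\circ)=|\rho|=\sum_k|\rho_k|$ coming from Howald's monomial formula expressed in splitting-polytope coordinates.

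For part (2), I first note that the assumption $\lct(\fa^\circ)\le t$ excludes the case ``all $|\rho_k|>1$'' (which would give $|\rho|>t$); the structural property then forces $|\rho_k|\le 1$ for every $k$. Fix a prime $p$ in the given infinite set where every $\rho_k$ adds without carrying: each $S_k$ is infinite, so $I=\emptyset$, and Theorem~\ref{Thm:F-pure-threshold-polytope}(1) yields $\fpt(\fa_p)=\fpt(\fa^\circ_p)=\lct(\fa^\circ)$. Chaining
$$\lct(\fa^\circ)=\fpt(\fa_p)\le\lct(\fa)\le\lct(\fa^\circ)$$
forces equality throughout and gives $\fpt(\fa_p)=\lct(\fa)$ for every such prime.

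For part (1), the hypothesis $\lct(\fa^\circ)>t$ combined with the structural property symmetrically forces $|\rho_k|>1$ for every $k$. Since $\fa$ has $t$ generators, $\lct(\fa)\le t$, so it suffices to produce infinitely many primes with $\fpt(\fa_p)\ge t$. The strict inequality $|\rho_k|>1$ together with $\rho_{k,j}\in[0,1]$ implies that for every sufficiently large $p$ the first-digit sum $\sum_j\lfloor\rho_{k,j}p\rfloor$ exceeds $p-1$, so $S_k=0$ for every $k$. For such $p$ the bound of Theorem~\ref{Thm:F-pure-threshold-polytope}(2) collapses to
$$\fpt(\fa_p)\ge\sum_{k=1}^t\bigl(|\tr{\rho_k}{0}|+1\bigr)=t,$$
yielding $\fpt(\fa_p)=t=\lct(\fa)$ for all but finitely many primes, in particular for an infinite set.

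I expect two delicate points. The first is justifying the identification $\lct(\fa^\circ)=|\rho|$ directly from the splitting-polytope construction of the paper, so that the structural dichotomy ``all $|\rho_k|\le 1$'' versus ``all $|\rho_k|>1$'' really aligns with the dichotomy $\lct(\fa^\circ)\le t$ versus $>t$. The second is handling the edge case $\rho_{k,j}=1$ cleanly in the base-$p$ argument of part~(1): for such a coordinate the zeroth digit $\rho_{k,j}^{(0)}$ may be $0$ or $1$ depending on conventions, and this affects both $|\tr{\rho_k}{0}|$ and the first-digit estimate $\sum_j\lfloor\rho_{k,j}p\rfloor$. Neither should be a real obstruction once the paper's conventions are fixed, but the proof needs to be written with them in mind.
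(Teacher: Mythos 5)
Your proposal is correct and follows essentially the same route as the paper: the same dichotomy on the $|\rho_k|$, the lower bounds from Theorem~\ref{Thm:F-pure-threshold-polytope} (with $S_k=\infty$, resp.\ $S_k=0$, the latter being exactly Lemma~\ref{lema:lemma8}(2)), the identification $\fpt(\fa_p^\circ)=|\rho|=\lct(\fa^\circ)$ via Proposition~\ref{Prop:Properties ftt ideals}(2) and Example~\ref{Ex:Fthrsholds}, and the squeeze $\fpt(\fa_p)\le\lct(\fa)\le\lct(\fa^\circ)$ (resp.\ $\le t$). The only point you leave implicit, and which the paper makes explicit by restricting to the cofinite set $P_\fa$ of primes with $\N_{\fa}=\N_{\fa_p}$, is that the reduction mod $p$ must not alter the Newton polyhedron (hence the term ideal and the maximal point $\rho$); your two flagged ``delicate points'' are precisely what Proposition~\ref{Prop:SplPolvsNewPol} together with Example~\ref{Ex:Fthrsholds}, and Lemma~\ref{lema:lemma8}(2), already settle.
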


Our third main result deals with a recent invariant of singularities in characteristic $p$ measuring the volume of the constancy regions for  generalized mixed test ideals \cite{BCNBRV}. This invariant is called the $F$-volume and again explicit computations are very hard. Using our construction of splitting polytopes for ideals  $\fa=(f_1,\ldots,f_t)$, we provide geometrical lower bounds for the $F$-volume of a collection of principal ideals.

\begin{theoremx}[{Theorem \ref{Thm:volume-polytope}}]	
		Consider a polynomial mapping $\boldsymbol{f}=(f_1,\ldots, f_t)$ of elements in $\m$. Suppose that $\SP_{\Bf}$ has a unique maximal point $\rho=(\rho_1, \ldots, \rho_t)$ with $\rho_i \in \RR^{l_i}$, and set for $i\in\{1,\ldots,t\}$,
	$$S_i=\sup \left\{\ell\in\mathbb{Z}_{\geq 0} \mid \sum_{j=1}^{l_i} \rho_{i,j}^{(e)}  \leq p-1\quad\text{for every}\quad 0 \leq e \leq \ell  \right\}.$$ 
	If $I$ denotes the set of indices for which $S_i$ is finite,  then
	$$ \prod_{i\in I} \left(|\langle \rho_i \rangle_{S_i}|+\frac{1}{p^{S_i}} \right)  \prod_{i\notin I} |\rho_i|\leq \Vol_F^{\m}((f_1),\ldots,(f_t)).$$ 
\end{theoremx}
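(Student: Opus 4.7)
The plan is to recognize the $F$-volume as a limit of counts of ``good'' exponent tuples, and to apply the splitting-polytope construction of Theorem~\ref{Thm: first main theorem} independently in each coordinate $i$. From~\cite{BCNBRV} one has
$$\Vol_F^{\m}((f_1),\ldots,(f_t)) = \lim_{e \to \infty} \frac{|N_e|}{p^{et}}, \qquad N_e = \{(a_1,\ldots,a_t) \in \mathbb{Z}_{\geq 0}^{t} : f_1^{a_1}\cdots f_t^{a_t} \notin \m^{[p^e]}\},$$
so it suffices to exhibit, for every $e\gg 0$, a subset of $N_e$ of cardinality asymptotic to $p^{et}\prod_i c_i$, where $c_i = |\rho_i|$ for $i\notin I$ and $c_i = |\langle \rho_i\rangle_{S_i}| + p^{-S_i}$ for $i\in I$.

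Running the same digit-by-digit construction used in the proof of Theorem~\ref{Thm:F-pure-threshold-polytope}, but now separately for each $i$, produces a set $A_i^{(e)}\subseteq \mathbb{Z}_{\geq 0}$ of cardinality at least $c_i p^e - O(1)$, together with, for every $a\in A_i^{(e)}$, a distinguished monomial in the expansion of $f_i^{a}$ whose multi-exponent in the variables of $R$ has each coordinate strictly less than $p^e$. The core step is then a Cartesian-product argument: for any tuple $(a_1,\ldots,a_t)\in A_1^{(e)}\times\cdots\times A_t^{(e)}$, the product of the individual distinguished monomials gives a monomial in $f_1^{a_1}\cdots f_t^{a_t}$ whose multi-exponent is the coordinatewise sum of the individual ones, and hence still has each coordinate strictly less than $p^e$; it follows that $f_1^{a_1}\cdots f_t^{a_t}\notin \m^{[p^e]}$, so $A_1^{(e)}\times\cdots\times A_t^{(e)}\subseteq N_e$. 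Dividing by $p^{et}$ and letting $e\to\infty$ yields the stated inequality.

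The hard part is the uncancellation built into the previous paragraph: for each tuple $(a_1,\ldots,a_t)$, the product of the distinguished monomials must survive with nonzero coefficient in the full expansion of $f_1^{a_1}\cdots f_t^{a_t}$. This is the multi-dimensional analogue of the ``unique leading monomial'' input to the proof of Theorem~\ref{Thm: first main theorem}, and it is precisely the hypothesis that $\SP_{\Bf}$ has $\rho$ as its \emph{unique} maximal point that supplies it: any alternative choice of monomials out of the $f_i$ would correspond to a non-maximal lattice point of $\SP_{\Bf}$, producing a distinct monomial in the expansion and leaving the distinguished one uncancelled. The indices $i\in I$ require the same mild perturbation of $\langle\rho_i\rangle_{S_i}$ used in Theorem~\ref{Thm:F-pure-threshold-polytope} to realize the extra $1/p^{S_i}$ in the count, and the block decomposition $\rho_i \in \RR^{l_i}$ keeps these corrections from interacting across distinct $i$, so the coordinate-wise constructions combine freely into the desired product bound.
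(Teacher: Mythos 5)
Your high-level plan (produce roughly $p^{et}\prod_i c_i$ lattice points in the set counted by the $F$-volume, using the surviving monomial machinery behind Theorem~\ref{Thm:F-pure-threshold-polytope}) is the right one, but the core ``Cartesian-product argument'' has a genuine gap, in fact two. First, the inference that the product of the distinguished monomials avoids $\m^{[p^e]}$ because each factor has every exponent strictly less than $p^e$ is false: exponents add, and a product of $t$ monomials each with coordinates $<p^e$ can easily land in $\m^{[p^e]}$. The actual control in this setting is the \emph{joint} constraint $\E_{\Bf}\langle\rho\rangle_e \prec \E_{\Bf}\rho \preceq \boldsymbol{1}_m$ coming from the single maximal point of the polytope $\SP_{\Bf}$ of the whole mapping; running the digit construction ``separately for each $i$'' discards precisely this coupling, and there is no per-$i$ substitute, since the block $\rho_i$ is in general \emph{not} a maximal point of $\SP_{\hat f_i}$ (Remark~\ref{rmk:P_LvsP_i}). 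Second, your construction requires, for \emph{every} $a\in A_i^{(e)}$ (a range of about $c_ip^e$ integers), a monomial of $f_i^{a}$ that survives mod $p$ and whose product over $i$ survives in $f_1^{a_1}\cdots f_t^{a_t}$. The machinery of Lemma~\ref{equal-vector}, Proposition~\ref{propoBinomio} and Lucas only delivers this for the specific exponents $p^e|\langle\rho_i\rangle_e|$ (resp.\ the perturbed $p^{S+e}|v_i|$ for $i\in I$), because both the non-cancellation and the non-vanishing of the binomial coefficients are tied to truncations of the one maximal point $\rho$; the claim that ``any alternative choice of monomials corresponds to a non-maximal lattice point, leaving the distinguished one uncancelled'' is not an argument that covers arbitrary tuples $(a_1,\ldots,a_t)$ ranging over boxes, and for general $a_i$ the relevant multinomial coefficients can certainly vanish mod $p$.

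What is missing --- and what makes the per-tuple witnesses unnecessary --- is the elementary monotonicity of the set in Definition~\ref{def1}: if $f_1^{n_1}\cdots f_t^{n_t}\notin\m^{[p^e]}$ and $m_i\le n_i$ for all $i$, then $f_1^{m_1}\cdots f_t^{m_t}\notin\m^{[p^e]}$, since $\m^{[p^e]}$ is an ideal. Hence a \emph{single} witness tuple puts the entire integer box beneath it inside $\V^{\m}_{(f_1),\ldots,(f_t)}(p^e)$, which already has cardinality at least $p^{et}\prod_i|\langle\rho_i\rangle_e|$ (resp.\ the perturbed product when $I\ne\emptyset$). This is exactly how the paper argues: it takes the polynomial $g$ built in the proof of Theorem~\ref{Thm:F-pure-threshold-polytope} --- namely $\prod_i f_i^{p^e|\langle\rho_i\rangle_e|}$ when $I=\emptyset$, and $\prod_{i\le r} f_i^{p^{S+e}|v_i|}\cdot\prod_{i>r} f_i^{p^{S+e}|\langle\rho_i\rangle_{S+e}|}$ otherwise --- notes $g\notin\m^{[p^e]}$ (resp.\ $\m^{[p^{S+e}]}$), concludes that the corresponding exponent tuple lies in $\V^{\m}_{(f_1),\ldots,(f_t)}(p^e)$, bounds $\Card$ of that set by the box count, divides by $p^{et}$ (resp.\ $p^{(S+e)t}$) and passes to the limit. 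If you replace your Cartesian-product step by this monotonicity observation, your outline becomes correct and coincides with the paper's proof.
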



\section{Splitting polytopes and Newton polyhedra of polynomial mappings}\label{Sec:Sec2}

In this section we associate two polytopes to a given ideal $\fa$ in a polynomial ring $R=\mathbb{k}[x_1, \ldots, x_m]$, with $\mathbb{k}$ an arbitrary field. One object is the usual Newton polytope of $\fa$, the other is the splitting polytope, see Definition~\ref{Def:ExpMat_SpliPol} for the details. The key result of this section is Proposition~\ref{propoBinomio} which allow us to control the coefficients of relevant monomials in precise powers of polynomials, by means of the splitting polytope. The terminology of $p$-expansions, that we describe next, will be useful in Section~\ref{Sec:Sec3}.

\subsection{Basics on $p$-expansions}
The following construction was introduced by R\'{e}nyi in \cite{Renyi} under the more general definition of $\beta$-expansions. It coincides essentially with the notion of nonterminating base $p$-expansion of \cite[Section 2]{HDpolynomials}.
\begin{definition}
	Let $p$ be an integer with $p\geq2$. Any real number $\alpha\in (0,1]$  can be uniquely written as
		\begin{equation}\label{Eq:p_expansion}
			\alpha=\mathlarger{\sum}_{k\geq 1}\frac{\alpha^{(k)}}{p^{k}},
		\end{equation}
		with $0\leq \alpha^{(k)} \leq p-1$, and the further assumption that the sequence $\{\alpha^{(k)}\}_{k\geq 1}$ does not eventually take only the value 0. We will say that the sequence $\{\alpha^{(k)}\}_{k\geq 1}$ represents $\alpha$ and call \eqref{Eq:p_expansion} the $p$-expansion of $\alpha$\footnote{Note that the $p$-expansion of $\alpha$ is different from its $p$-adic expansion, which is not defined for general real numbers.}.
\end{definition}
The representation $\{\alpha^{(k)}\}_{k\geq 1}$  is done through an infinite sequence which is obtained by using the `greedy' algorithm of R\'{e}nyi when $\alpha\neq \frac{a}{p^e}$, for an integer $a$. When $\alpha=\frac{a}{p^e}$ we choose the representation with $p-1$ in all the places after $e$. We denote by $\langle\alpha\rangle_e:=\sum_{k= 1}^e\frac{\alpha^{(k)}}{p^{k}},$ the $e$-th \textit{truncation} of $\alpha$ in \eqref{Eq:p_expansion}, and additionally we set $\alpha^{(0)}=\langle\alpha\rangle_0=\langle 0\rangle_e=0$ and $\langle\alpha\rangle_\infty=\alpha$.
\begin{definition}
	For $\alpha=(\alpha_1,\ldots,\alpha_m)\in\RR^m$ we define $|\alpha|$ as the sum $\alpha_1+\cdots+\alpha_m$ of the entries of $\alpha$. When $\alpha\in[0,1]^m$ we denote by $\langle\alpha\rangle_e$ the vector $(\langle\alpha_1\rangle_e,\ldots,\langle\alpha_m\rangle_e)$. We also say that $\alpha_1,\ldots,\alpha_m$ add without
	carrying modulo $p$ if $\alpha_1^{(k)}+\cdots+\alpha_m^{(k)}\leq p-1$ for every $k\geq 1$.
\end{definition}
If moreover $p$ is assumed to be a prime number, then the classical Lucas's Lemma~\cite{Lucas} asserts that $\binom{|r|}{r}\not \equiv 0 \bmod p$ if and only if the expansions in base $p$ of the entries of $r$ add without carrying.
\begin{corollary}\label{Cor:adding wo carrying}
	The entries of a vector $(\alpha_1,\ldots,\alpha_m)\in[0,1]^m$ add without carrying mod $p$ if and only if, for every $e\geq 1$
	\[\binom{p^e |\langle\alpha\rangle|}{p^e \langle\alpha\rangle_e}  \not \equiv 0 \bmod p.
	\]
\end{corollary}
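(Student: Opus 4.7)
The plan is to deduce the corollary from a single application of Lucas's Lemma, as recalled immediately before the statement. The key bridge between the two formulations is the elementary observation that, for each index $i$ and each integer $e\geq 1$, the scaled truncation
\[
p^{e}\langle\alpha_i\rangle_e \;=\; \sum_{k=1}^{e}\alpha_i^{(k)}\,p^{e-k}
\]
is a non-negative integer in $\ZZ$ whose base-$p$ expansion has digits (read from the most significant to the least significant position) exactly $\alpha_i^{(1)},\alpha_i^{(2)},\ldots,\alpha_i^{(e)}$, and $0$ in every higher position. Consequently the vector $r:=p^{e}\langle\alpha\rangle_e$ has integer entries with $|r|=p^{e}|\langle\alpha\rangle_e|$, so Lucas's Lemma is directly applicable to the multinomial coefficient $\binom{|r|}{r}$.

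First I would fix $e\geq 1$ and apply Lucas's Lemma to $r=p^{e}\langle\alpha\rangle_e$. Since carrying in a columnwise sum of non-negative integers in base $p$ is decided column by column, the lemma yields the equivalence
\[
\binom{p^{e}|\langle\alpha\rangle_e|}{p^{e}\langle\alpha\rangle_e}\not\equiv 0 \bmod p \;\Longleftrightarrow\; \sum_{i=1}^{m}\alpha_i^{(k)}\leq p-1\quad\text{for every}\quad 1\leq k\leq e.
\]
Next I would let $e$ range over all positive integers: the left-hand side holds for every $e\geq 1$ precisely when the right-hand side holds for every $k\geq 1$, and the latter is exactly the condition that $\alpha_1,\ldots,\alpha_m$ add without carrying modulo $p$ in the sense of the preceding definition. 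Both implications of the corollary follow at once.

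There is no substantive obstacle; the content is essentially a repackaging of Lucas's Lemma at each finite truncation. The only point that merits a brief verification is the identification of the base-$p$ digits of $p^{e}\langle\alpha_i\rangle_e$ with the $p$-expansion coefficients $\alpha_i^{(1)},\ldots,\alpha_i^{(e)}$: for $\alpha_i$ not of the form $a/p^{e_0}$ this is immediate from the greedy algorithm, while in the boundary case $\alpha_i=a/p^{e_0}$ the non-terminating convention $\alpha_i^{(k)}=p-1$ for $k>e_0$ must be reconciled with the fact that $p^{e}\langle\alpha_i\rangle_e$ remains an honest integer with the prescribed digits for all $e>e_0$, which follows from the elementary identity $\sum_{k>e_0}(p-1)/p^{k}=1/p^{e_0}$ that produces the non-terminating expansion to begin with.
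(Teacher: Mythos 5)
Your argument is correct and is precisely the route the paper intends: the corollary is presented as an immediate consequence of the multinomial form of Lucas's Lemma, applied for each $e$ to the integer vector $p^{e}\langle\alpha\rangle_e$ whose base-$p$ digits are the truncated $p$-expansion digits $\alpha_i^{(1)},\ldots,\alpha_i^{(e)}$. Your extra care about the non-terminating convention for $\alpha_i=a/p^{e_0}$ is a welcome (if routine) check, and your reading of the numerator as $p^{e}|\langle\alpha\rangle_e|$ matches the obvious intended meaning of the statement.
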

\begin{lemma}[{\cite[Lemma 8]{HDpolynomials}}]\label{lema:lemma8}
	Consider a vector $(\alpha_1,\ldots,\alpha_m)$ in $\QQ^m\cap [0,1]^m$.
	\begin{enumerate}
		\item If $|\alpha|\leq 1$, then $\alpha_1,\ldots,\alpha_m$ add without carrying modulo infinitely many primes.
		\item If $|\alpha|> 1$, then $\alpha_1^{(1)}+\cdots+\alpha_m^{(1)} \geq p$, for $p$ large enough.
	\end{enumerate}
\end{lemma}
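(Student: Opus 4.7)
For part (1), the plan is to use Dirichlet's theorem on primes in arithmetic progressions. Write $\alpha_i = r_i/N$ with a common denominator $N$, so $\sum r_i \leq N$ because $|\alpha| \leq 1$. There are infinitely many primes $p \equiv 1 \pmod{N}$, and for any such $p$ I would show that the $p$-expansion of each $\alpha_i$ is purely periodic of period one, with repeating digit $r_i(p-1)/N$. Indeed, writing $p = 1 + kN$, one computes $p\alpha_i = r_i k + r_i/N$, which identifies the leading digit as $r_i k = r_i(p-1)/N$ and exhibits the fractional remainder as $\alpha_i$ itself; iteration yields periodicity. The expansion is automatically non-terminating (the digit is at most $p-1$, with equality only when $r_i = N$, i.e.\ $\alpha_i = 1$, in which case the paper's convention also gives all digits equal to $p-1$). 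Summing at any position,
\[
\sum_{i=1}^{m}\alpha_i^{(e)} = \frac{p-1}{N}\sum_{i=1}^{m} r_i = (p-1)|\alpha| \leq p-1,
\]
so no carry occurs.

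For part (2), the plan is much shorter. For any $\alpha \in (0,1]$ the tail $\alpha - \alpha^{(1)}/p = \sum_{k \geq 2} \alpha^{(k)}/p^k$ is bounded by $1/p$, which gives the uniform estimate $\alpha^{(1)} \geq p\alpha - 1$. Summing over $i$,
\[
\sum_{i=1}^{m}\alpha_i^{(1)} \geq p|\alpha| - m,
\]
and since $|\alpha| > 1$ is fixed, the right-hand side exceeds $p$ as soon as $p > m/(|\alpha|-1)$.

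The main subtlety I would watch for is the paper's convention of always choosing the non-terminating representation when a number admits a finite base-$p$ expansion, which can shift the first-digit relation down to $\alpha^{(1)} = p\alpha - 1$. In part (1) this issue is sidestepped because $p \equiv 1 \pmod{N}$ forces $\gcd(p, N) = 1$, so none of the $\alpha_i$ have a finite $p$-ary expansion for the chosen primes. In part (2) the estimate $\alpha^{(1)} \geq p\alpha - 1$ is valid under either convention, so the convention is already absorbed into the argument. With these two observations in place, the proof reduces to a short Dirichlet argument plus a one-line geometric estimate, and I do not anticipate any deeper obstacle.
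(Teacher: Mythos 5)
Your proof is correct, and since the paper offers no argument of its own for this lemma (it is quoted verbatim from Hern\'andez's work, \cite[Lemma 8]{HDpolynomials}), your write-up is essentially the standard proof of that cited result: for (1), Dirichlet primes $p\equiv 1 \pmod N$ make every digit equal to $(p-1)\alpha_i$ (the same fact the paper invokes in Example~\ref{Ex:pExpansion} via \cite[Lemma 4]{HDpolynomials}), so the digit sums are $(p-1)|\alpha|\leq p-1$; for (2), the first-digit bound $\alpha^{(1)}\geq p\alpha-1$ gives $\sum_i\alpha_i^{(1)}\geq p|\alpha|-m\geq p$ once $p\geq m/(|\alpha|-1)$, matching the threshold the authors quote after Notation~\ref{Notation}. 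Your handling of the non-terminating convention (coprimality of $p$ and $N$ in (1), and the convention-independence of the estimate in (2)) is exactly the right point to check, and it is handled correctly.
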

\begin{example}\label{Ex:pExpansion}
	If $\alpha\in (0,1]$ and $p$ is a prime number with the property $(p-1)\alpha\in \ZZ_{> 0}$, then $\alpha^{(e)}=(p - 1)\alpha$ for every $e\geq 1$ by~\cite[Lemma 4]{HDpolynomials}. Now fix a set of integers $A,B,C$ and consider a prime $p$ such that $p \equiv 1 \mod ABC$. Note that whenever $AB+BC+AC\leq ABC$, then $1/A$, $1/B$, and $1/C$ add without carrying modulo $p$.
\end{example}

\subsection{Splitting polytopes of polynomial mappings}\label{Ssec:Spli Pol}
Introduced first by Musta{\c{t}}{\u{a}}, Takagi and Watanabe \cite{MTW} for the case of one polynomial, the splitting polytope has been studied further by Hern\'andez \cite{HDbinomial,HDpolynomials}. Our construction for polynomial mappings follows closely the one given by Hern\'andez.

Given $\alpha, \beta \in \RR^m$, we denote by $\alpha \preceq \beta$ the component-wise inequality, and denote by $\boldsymbol{1}_m$ the vector $(1,\ldots,1)\in\RR^m$. Let $f=\sum_{a\in{\mathbb{Z}_{\geq 0}^m}} c_ax^a$ be a non-constant polynomial in $R$. The support of $f$ is the set 
\[\Supp(f)=\{a\in{\mathbb{Z}_{\geq 0}^m} \mid c_a\neq 0\}.\]
If $\Card(\Supp(f))=n$, then the $m\times n$ matrix $\E_f$ having as columns the elements of $\Supp(f)$ is called the \textit{exponent matrix} of $f$.
\begin{definition}\label{def:SpliPolf}
	The set $\SP_f:=\{\gamma \in \RR_{\geq 0}^{n} \mid \E_f \cdot \gamma\preceq \textbf{1}_{m}\}$, is called the \textit{splitting polytope of} $f$.
\end{definition} 
Consider now a finite list $\{f_1, \ldots, f_t\}$ of polynomials in $R$ belonging to $\m$. In order to define a splitting polytope for the polynomial mapping $\boldsymbol{f}=(f_1,\ldots, f_t)\, :\mathbb{k}^m\,\mapsto \mathbb{k}^t$ we need some technical assumptions. For any $1\leq i\leq t$, assume that $f_i$ has exactly $n_i$ monomials, i.e. $\Card(\Supp(f_i))=n_i$, and  
\begin{equation}\label{Eq:monomials_in_f}
	f_i=c_{a_{1}(i)}x^{a_{1}(i)}+\cdots+c_{a_{n_i}(i)}x^{a_{n_i}(i)}.
\end{equation}

Now we set $\hat{f}_1=f_1$ and define  $\hat{f}_2$, as the polynomial obtained from $f_2$ by removing the terms with monomials already appearing in $f_1$ regardless of the corresponding coefficients. We proceed with this construction and define inductively $\hat{f}_i$, as the polynomial obtained from $f_i$ by removing the terms with monomials already appearing in any of the polynomials $f_1, \ldots, f_{i-1}$, regardless of the corresponding coefficients. Note that the list of monomials of all the polynomials in $\{\hat{f}_1, \ldots, \hat{f}_t\}$ is reduced in the sense that it contains only the monomials in the elements of the list $\{f_1, \ldots, f_t\}$ without repetitions. We denote the exponent matrix of each element in the reduced list by $\tE_i$ and set $l_i=\Card(\Supp(\hat{f}_i))$. When $l_i=0$ we just consider the reduction of the elements in the list $\{f_1,\ldots,f_{i-1},f_{i+1},\ldots, f_t\}$.
\begin{definition}\label{Def:ExpMat_SpliPol}
Given a polynomial mapping $\Bf=(f_1, \ldots, f_t)$ we set $N=l_1+\cdots+l_t$ and define the following.
\begin{enumerate}
\item An \textit{exponent matrix} of $\Bf$ is an $(m \times N)$-matrix, lets say $\E_{\Bf}$, having as columns the different elements of $\bigcup_{i=1}^t\Supp(\hat{f}_i)$, i.e., $\E_{\Bf}=(\tE_{1}\mid \cdots\mid \tE_{t})$.
\item The \textit{splitting polytope} of $\Bf$ is the set 
\[\SP_{\Bf}=\{\gamma \in \RR_{\geq 0}^{N} \mid \E_{\Bf}\cdot \gamma \preceq \textbf{1}_{m}\}.\]
\end{enumerate}
\end{definition}
\begin{example}\label{Ex:List}
	For the polynomial mapping $\Bf=(x^{a}+y^b+z^c,-x^a+xyz+x^2y^2z^2,y^b+xyz+x^3y^2)$, an exponent matrix is 
	\[\E_{\Bf}=
	\begin{pmatrix}
		a & 0 & 0 &1 & 2 & 3\\
		0 & b & 0 &1 & 2 & 2\\
		0 & 0 & c &1 & 2 & 0
	\end{pmatrix}. 
	\]
\end{example}
Note that for a given polynomial mapping $\Bf=(f_1, \ldots, f_t)$ the exponent matrix is unique up to permutations. Also, the sets $\SP_f$ and $\SP_{\Bf}$ are bounded convex polytopes, in fact, they lie inside $[0,1]^n$ and $[0,1]^N$ respectively. Moreover since, for instance,  $\SP_f$ is a compact set, the function $|\cdot|:\, \RR^n\to\ \RR$ defined by $|(\gamma_1,\ldots,\gamma_n)|=\gamma_1+\cdots+\gamma_n$ reaches a maximum, let us say $M$. The set $\{\gamma\in \SP_f \mid |\gamma|=M\}$ defines a proper face of $\SP_f$, that is called maximal.

\begin{figure}[ht]
	\tdplotsetmaincoords{70}{115}
	\begin{tikzpicture}[scale=0.5,tdplot_main_coords]
		\draw[thick,->] (0,0,0)--(9,0,0) node[anchor=north east]{$\gamma_1$};
		\draw[thick,->] (0,0,0)--(0,7.5,0) node[anchor=north west]{$\gamma_2$};
		\draw[thick,->] (0,0,0)--(0,0,7) node[anchor=west]{$\gamma_3$};
		
		\draw[semithick,black,fill=cyan!30!blue!50] (6,0,4) -- (0,0,4)-- (0,2.25,4) -- (4.3125,2.25,4) -- cycle;
		\draw[semithick,black,fill=cyan!30!blue!50] (0,2.25,4) -- (4.3125,2.25,4) --(2.625,4.5,0) -- (0,4.5,0) -- cycle;
		\draw[semithick,black,fill=cyan!30!blue!50]
		(6,0,4) -- (6,0,0) --(2.625,4.5,0) -- (4.3125,2.25,4) -- cycle;
		\node[left] at (6,0,0.11){$v_1$};
		\node[left] at (6,0,4){$v_2$};
		\node[left] at (0,0,4.11){$v_3$};
		\node[above] at (0,2.25,4){$v_4$};
		\node[right] at (4.3125,2.36,3.9){$v_5$};
		\node[above] at (0,4.7,0){$v_6$};
		\node[below] at (2.625,4.5,0){$v_7$};
		\draw[dashed,black] (0,0,0)--(0,0,4) node[below left]{};
		\draw[dashed,black] (0,0,0)--(0,4.5,0) node[below left]{};
		\draw[dashed,black] (0,0,0)-- (6,0,0) node[below left]{};
	\end{tikzpicture}
	\caption{Splitting polytope of $\Bf=(x^{a}+xy^b,yz^c)$.}
	\label{Fig:ExSplitPol}
\end{figure}
		
\begin{example}\label{Ex:SplitPol}
	Consider the mapping $\Bf=(x^{a}+xy^b,yz^c)$ with $a,b,c \geq 1$, then 
	\begin{equation*}
	 \E_{\Bf}=
	\begin{pmatrix}
		a & 1 & 0\\
		0 & b & 1\\
		0 & 0 & c
	\end{pmatrix},    
	\end{equation*}
	and the splitting polytope of $\Bf$ is given by $\SP_{\Bf}=\{(\gamma_1,\gamma_2,\gamma_3)\in \RR_{\geq 0}^3 \mid a\gamma_1+\gamma_2 \leq 1, b\gamma_2+\gamma_3 \leq 1, \text{ and } c\gamma_3 \leq 1 \}$, see Figure \ref{Fig:ExSplitPol}. The polytope $\SP_{\Bf}$ is given by the convex hull of the vectors: $(0,0,0),\; v_1=\left(\frac{1}{a},0,0\right),\; v_2=\left(\frac{1}{a},0,\frac{1}{c}\right),\; v_3=\left(0,0,\frac{1}{c}\right),\; v_4=\left(0,\frac{c-1}{bc},\frac{1}{c}\right),\; v_5=\left(\frac{bc-c+1}{abc},\frac{c-1}{bc},\frac{1}{c}\right),\; v_6=\left(0,\frac{1}{b},0\right),$ and $v_7=\left(\frac{b-1}{ab},\frac{1}{b},0 \right)$. The maximal face of $\SP_{\Bf}$ depends on the values of $a$ and $c$, as follows. 
	\begin{itemize}
		\item If $c=1$ or if $c\neq1$ and $a \neq 1$, then the maximal face is $\{v_5\}$.
		\item  If $c\neq1$ and $a=1$, then the maximal face is the edge joining $v_2$ and $v_5$. 
	\end{itemize}
\end{example}

\begin{remark}
	Given a list of polynomials $\Bf$, one may be inclined to think that $\E_{\Bf}=\E_F$, where $F$ is the polynomial obtained by adding the different terms of each $f_i$ in $\Bf$. This is not the case, due to cancellations, cf. Example \ref{Ex:List}.
\end{remark}
For the following technical lemma we fix an integer $e\geq 1$, and recall that the $e$-th truncation of $\rho$ in its $p$-expansion is denoted by $\langle \rho \rangle_e$.

\begin{lemma}\label{equal-vector}
Assume that $\SP_{\Bf}$ has a unique maximal point $\rho\in [0,1]^N$, which is written as  $\rho=(\rho_1, \ldots, \rho_t)$ with $\rho_i \in \RR^{l_i}$, according to Definition \ref{Def:ExpMat_SpliPol}. Then the following statements hold.
\begin{enumerate}
\item If for some $\gamma \in \RR_{\geq 0}^N$, $|\gamma|=|\langle \rho \rangle_e|$ and $\E_{\Bf} \gamma = \E_{\Bf}  \langle \rho \rangle_e$,  then $ \gamma = \langle \rho \rangle_e $. Moreover, if 
there exist vectors $\phi, \psi \in \RR_{\geq 0}^{N}$, with $\psi \preceq \langle \rho \rangle_e$ and such that $|\phi|=|\psi|$ and  $\E_{\Bf} \phi = \E_{\Bf} \psi$;  then $\phi=\psi$.
\item Let $i$ be a fixed index in $\{1,\ldots,t\}$. If for some $\delta \in \RR_{\geq 0}^{l_i}$, $|\delta|=|\langle \rho_i \rangle_e|$ and $\tE_i \delta = \tE_i  \langle \rho_i \rangle_e$, then $ \delta = \langle \rho_i \rangle_e $. Moreover, if  there exist vectors $\phi, \psi  \in \RR_{\geq 0}^{l_i}$, with $\psi \preceq \langle \rho_i \rangle_e$ and such that $|\phi|=|\psi|$ and  $\tE_i \phi = \tE_i \psi$;  then $\phi=\psi$.
\end{enumerate}
\end{lemma}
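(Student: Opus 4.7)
The plan is to prove part (1) by a completion trick that exploits the uniqueness of the maximal point, then derive the moreover clause of (1) from the first assertion, and finally deduce (2) from (1) by padding.

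For the first assertion of (1), given $\gamma \in \RR_{\geq 0}^N$ with $|\gamma| = |\langle \rho \rangle_e|$ and $\E_{\Bf}\gamma = \E_{\Bf}\langle \rho \rangle_e$, I would set
\[
\gamma' := \gamma + (\rho - \langle \rho \rangle_e).
\]
Since $\langle \rho \rangle_e \preceq \rho$, the shift $\rho - \langle \rho \rangle_e$ is in $\RR_{\geq 0}^N$, so $\gamma' \in \RR_{\geq 0}^N$. A quick computation gives $\E_{\Bf}\gamma' = \E_{\Bf}\langle \rho \rangle_e + \E_{\Bf}(\rho - \langle \rho \rangle_e) = \E_{\Bf}\rho \preceq \textbf{1}_m$, so $\gamma' \in \SP_{\Bf}$. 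Adding the coordinates yields $|\gamma'| = |\langle \rho \rangle_e| + |\rho| - |\langle \rho \rangle_e| = |\rho|$, which is the maximum of $|\cdot|$ on $\SP_{\Bf}$. By the uniqueness of the maximal point, $\gamma' = \rho$, and hence $\gamma = \langle \rho \rangle_e$.

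For the moreover clause of (1), given $\phi,\psi \in \RR_{\geq 0}^N$ with $\psi \preceq \langle \rho \rangle_e$, $|\phi| = |\psi|$, and $\E_{\Bf}\phi = \E_{\Bf}\psi$, I would set $\gamma := \phi + (\langle \rho \rangle_e - \psi)$. The hypothesis $\psi \preceq \langle \rho \rangle_e$ guarantees nonnegativity, while the remaining hypotheses yield $|\gamma| = |\langle \rho \rangle_e|$ and $\E_{\Bf}\gamma = \E_{\Bf}\langle \rho \rangle_e$. The first assertion forces $\gamma = \langle \rho \rangle_e$, i.e. $\phi = \psi$.

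For part (2), I plan to reduce to (1) by embedding: given $\delta \in \RR_{\geq 0}^{l_i}$ with $|\delta| = |\langle \rho_i \rangle_e|$ and $\tE_i \delta = \tE_i \langle \rho_i \rangle_e$, define
\[
\gamma := (\langle \rho_1 \rangle_e, \ldots, \langle \rho_{i-1} \rangle_e, \delta, \langle \rho_{i+1} \rangle_e, \ldots, \langle \rho_t \rangle_e) \in \RR_{\geq 0}^N.
\]
Using the block decomposition $\E_{\Bf} = (\tE_1 \mid \cdots \mid \tE_t)$, one checks at once $|\gamma| = |\langle \rho \rangle_e|$ and $\E_{\Bf}\gamma = \E_{\Bf}\langle \rho \rangle_e$, so part (1) gives $\gamma = \langle \rho \rangle_e$ and therefore $\delta = \langle \rho_i \rangle_e$. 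The moreover clause of (2) follows by the same padding: for $\phi,\psi \in \RR_{\geq 0}^{l_i}$ satisfying its hypotheses, insert them into the $i$-th block of the vector whose other blocks are $\langle \rho_j \rangle_e$, then apply the moreover clause of (1).

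The only conceptual point is spotting the completion $\gamma' = \gamma + \rho - \langle \rho \rangle_e$; once that is in place the rest is bookkeeping, and I do not anticipate any real obstacle. The block-padding for (2) works only because the exponent matrix $\E_{\Bf}$ decomposes as the horizontal concatenation of the $\tE_i$, which is exactly how it was defined in Definition~\ref{Def:ExpMat_SpliPol}.
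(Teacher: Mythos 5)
Your proof is correct and follows essentially the same route as the paper: the completion trick $\gamma'=\gamma+(\rho-\langle\rho\rangle_e)$ is exactly the argument of \cite[Lemma 24]{HDpolynomials}, which the paper cites for part (1) instead of writing out, and your block-padding reduction of part (2) to part (1) is the paper's argument verbatim. The only cosmetic difference is in the moreover clause of (2), where you pad and invoke the moreover clause of (1) while the paper shifts by $\phi'=\phi+\langle\rho_i\rangle_e-\psi$ and applies the first claim of (2); these are interchangeable.
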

\begin{proof}
 The case of just one polynomial is stated and proved in  \cite[Lemma 24]{HDpolynomials}. The proof given there can be easily extended to the present setting to justify the first part. Note, nevertheless, that a different argument is required to prove the second part, since our hypotheses do not imply in general that $\rho_i$ is a maximal point of $\SP_{\hat{f}_i}$ (see Remark \ref{rmk:P_LvsP_i}). 
 
 So we  start by considering the vector
 \[\gamma=(\langle \rho_1 \rangle_e, \ldots, \langle \rho_{i-1}\rangle_e,\delta,\langle \rho_{i+1} \rangle_e,\ldots,\langle \rho_{t} \rangle_e ) \in \RR_{\geq 0}^N.
 \]
It is clear that $|\gamma|=|\langle \rho \rangle_e|$, since $|\delta|=|\langle \rho_i \rangle_e|$. From $\tE_i \delta = \tE_i  \langle \rho_i \rangle_e$ follows
 \[\E_{\Bf} \gamma=\tE_1 \langle \rho_1 \rangle_e + \cdots + \tE_{i-1}\langle \rho_{i-1}\rangle_e + \tE_i \delta + \tE_{i+1} \langle \rho_{i+1} \rangle_e + \cdots + \tE_t \langle \rho_{t} \rangle_e =\E_{\Bf} \langle \rho \rangle_e,
 \]
and the first part of the lemma gives $\gamma= \langle \rho \rangle_e$, hence $\delta=\langle \rho_i \rangle_e$. For the second claim assume that there exist  $\phi, \psi \in \RR_{\geq 0}^{l_i}$, with $\psi \preceq \langle \rho_i \rangle_e$ and such that $|\phi|=|\psi|$ and $\tE_i \phi = \tE_i \psi$. Taking  $\phi^\prime=\phi+\langle \rho_i \rangle_e-\psi$ gives  $|\phi^\prime|= |\langle \rho_i \rangle_e|$ and $\tE_i \phi^\prime= \tE_i \langle \rho_i \rangle_e$. The first claim of part (2) implies $\phi^\prime=\langle \rho_i \rangle_e$, and consequently $\phi=\psi$.
\end{proof}

The following proposition provides an explicit formula for the coefficient of a relevant monomial in a polynomial $f$ raised to the power $p^{e}|\langle \rho_i \rangle_e|$. Here we assume that in $\Bf=(f_1, \ldots, f_t)$, every member can be written as in \eqref{Eq:monomials_in_f}, i.e. $\Card(\Supp(f_i))=n_i$.

\begin{proposition} \label{propoBinomio}
	 Under the assumptions of Lemma \ref{equal-vector}, the following statements hold.
\begin{enumerate}
\item The coefficient of the monomial $x^{p^{e}\tE_i \langle \rho_i \rangle_e}$ in $f_i^{p^{e} |\langle \rho_i \rangle_e|}$ is $\binom{p^{e}|\langle \rho_i \rangle_e|}{p^{e}\langle \rho_i \rangle_e}c_{a(i)}^{b_{e}(i)}$, where $b_{e}(i)$ is the vector in $\ZZ_{\geq 0}^{n_i}$ having $p^{e}\langle \rho_i \rangle_e$ in the first $l_i$ entries and $0$ in the remaining ones. Moreover, the coefficient of the monomial $x^{p^{e}\E_{\Bf} \langle \rho \rangle_e}$ in the polynomial $g:=\prod_{i=1}^{t} f_i^{p^{e}|\langle \rho_i \rangle_e|}$ is $\prod_{i=1}^{t}\binom{p^{e}|\langle \rho_i \rangle_e|}{p^{e}\langle \rho_i \rangle_e}c_{a(i)}^{b_{e}(i)}$.
\item Let $v$ be an element of  $\frac{1}{p^{e}}\ZZ_{\geq 0}^N$, satisfying $v \preceq \langle \rho \rangle_e$ and written in the form $v=(v_1, \ldots, v_t)$ with $v_i \in \frac{1}{p^{e}}\ZZ_{\geq 0}^{l_i}$. Then, the coefficient of the monomial $x^{p^{e}\tE_i v_i}$ in $f_i^{p^{e}|v_i|}$ is $\binom{p^{e}|v_i|}{p^{e} v_i}c_{a(i)}^{d_{e}(i)}$, where $d_{e}(i)$ is a vector of the form $p^e(v_i,0,\ldots,0)\in\ZZ_{\geq 0}^{n_i}$. Moreover, the coefficient of the monomial $x^{p^{e}\E_{\Bf} v}$ in the polynomial $g=\prod_{i=1}^{t} f_i^{p^{e}|v_i|}$ is $\prod_{i=1}^{t}\binom{p^{e}|v_i|}{p^{e}v_i }c_{a(i)}^{d_{e}(i)}$.
\end{enumerate}
\end{proposition}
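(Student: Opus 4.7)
The strategy is to expand the relevant powers via the multinomial theorem and then invoke Lemma \ref{equal-vector} to show that a unique exponent vector contributes to the coefficient of interest. Write $A_i$ for the full $m\times n_i$ exponent matrix of $f_i$; after reordering columns, its first $l_i$ columns form $\tE_i$, while the remaining columns $\tE_i'$ consist of monomials that already appeared in some $\hat{f}_j$ with $j<i$. The first step is to construct a redistribution map $\iota_i\colon \ZZ_{\geq 0}^{n_i}\to\ZZ_{\geq 0}^N$ that, for $w=(\phi,\phi')$, places $\phi$ in block $i$ of $\RR^N$ and routes each coordinate of $\phi'$ to the unique earlier slot $(j,s)$ where its monomial first appeared. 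By construction $|\iota_i(w)|=|w|$ and $\E_{\Bf}\iota_i(w)=A_iw$.

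For the first claim of Part (1), the multinomial theorem writes $f_i^{p^{e}|\langle\rho_i\rangle_e|}$ as a sum of terms indexed by vectors $w$ with $|w|=p^{e}|\langle\rho_i\rangle_e|$, and the coefficient of $x^{p^{e}\tE_i\langle\rho_i\rangle_e}$ is the partial sum over those $w$ with $A_iw=p^{e}\tE_i\langle\rho_i\rangle_e$. Applying the Moreover clause of Lemma \ref{equal-vector}(1) with $\phi=\iota_i(w)/p^{e}$ and $\psi$ equal to $\langle\rho_i\rangle_e$ in block $i$ and $0$ elsewhere (so that $\psi\preceq\langle\rho\rangle_e$) collapses the sum to the single index $w=b_{e}(i)$, yielding the stated coefficient. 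For the product statement, expand $g=\prod_i f_i^{p^{e}|\langle\rho_i\rangle_e|}$ as a multiple sum over tuples $(w^{(1)},\ldots,w^{(t)})$; the coefficient of $x^{p^{e}\E_{\Bf}\langle\rho\rangle_e}$ collects those tuples with $|w^{(i)}|=p^{e}|\langle\rho_i\rangle_e|$ for each $i$ and $\sum_i A_iw^{(i)}=p^{e}\E_{\Bf}\langle\rho\rangle_e$. Lemma \ref{equal-vector}(1) applied to $\bigl(\sum_i\iota_i(w^{(i)})\bigr)/p^{e}$ with $\psi=\langle\rho\rangle_e$ yields $\sum_i\iota_i(w^{(i)})=p^{e}\langle\rho\rangle_e$. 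Because $\iota_j(w^{(j)})$ has support only in blocks of index at most $j$, a reverse induction on $i$ starting from $i=t$ peels off the blocks one at a time: in block $i$ only $\iota_i(w^{(i)})$ contributes, so $\phi^{(i)}=p^{e}\langle\rho_i\rangle_e$, and then the mass identity $|\phi^{(i)}|=|w^{(i)}|$ forces $\phi'^{(i)}=0$. Only the tuple $w^{(i)}=b_{e}(i)$ survives, giving the product formula. Part (2) is proved identically: replace $\langle\rho_i\rangle_e$ by $v_i$ throughout, and note that the hypothesis $v\preceq\langle\rho\rangle_e$ is exactly what is needed for the comparison vector built from $v$ to satisfy the Moreover clause of Lemma \ref{equal-vector}(1).

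The main obstacle is the bookkeeping around the redundant columns $\tE_i'$: one must choose the redistribution map $\iota_i$ so that the overall combinatorics of $\E_{\Bf}$ is respected, and the reverse induction for the product depends crucially on the triangular support property that $\iota_j(w^{(j)})$ never routes mass to blocks of index larger than $j$. Once this triangular structure is in place, the two parts separate cleanly through Lemma \ref{equal-vector}, and no additional uniqueness argument is needed.
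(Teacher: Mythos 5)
Your proof is correct, and it rests on the same two pillars as the paper's argument: the multinomial expansion and the rigidity supplied by Lemma \ref{equal-vector}. The organization, though, is genuinely different. The paper splits the multinomial indices into those supported on the first $l_i$ coordinates and the rest: the former are handled by Lemma \ref{equal-vector}(2) with $\delta=\frac{1}{p^e}\hat{k}$, and the latter are excluded by a contradiction in which the redundant mass $k_{l_i+1}$ is grafted, ad hoc, onto the block of an earlier polynomial and Lemma \ref{equal-vector}(1) is applied to the resulting vector $\gamma$. Your routing map $\iota_i$ systematizes exactly that construction and lets you treat all indices at once through the Moreover clause of Lemma \ref{equal-vector}(1), taking $\psi$ equal to $\langle \rho_i \rangle_e$ (or $v_i$) in block $i$ and zero elsewhere; in particular you never need part (2) of the lemma, and parts (1) and (2) of the proposition become literally the same computation, with $v\preceq\langle\rho\rangle_e$ doing the work the paper's truncation does. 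You also make explicit something the paper compresses: the product (``Moreover'') statements about $g=\prod_{i} f_i^{p^{e}|\langle \rho_i \rangle_e|}$ are dismissed there as ``a straightforward calculation'', whereas your reverse induction on blocks, driven by the triangular support of $\iota_j$ (no mass routed to blocks of index larger than $j$), is precisely the bookkeeping needed to see that the only surviving tuple is $(b_e(1),\ldots,b_e(t))$, respectively $(d_e(1),\ldots,d_e(t))$. The one point worth stating explicitly is that $\iota_i$ sends distinct coordinates of $w$ to distinct slots of $\RR^N$ (the redundant monomials of $f_i$ are pairwise distinct and land in blocks $j<i$), so that $\iota_i(w)=p^{e}\psi$ really does force $w=b_e(i)$; with that remark your argument is complete and, in the product case, somewhat more detailed than the paper's.
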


\begin{proof}
	Consider a polynomial $h$ with $\Card(\Supp(h))=n$ and write $h=c_{a_{1}}x^{a_{1}}+\cdots+c_{a_{n}}x^{a_{n}}$. The multinomial theorem implies that for a positive integer $Q$,
	\begin{equation}\label{eq:multinomial}
		h^Q=(c_{a_{1}}x^{a_{1}}+\cdots+c_{a_{n}}x^{a_{n}})^Q=\sum_{|k|=Q} \binom{Q}{k}c_{a}^k x^{\E k},
	\end{equation}
	where we use the convention $\E:=\E_h$. We denote the set of indices in the sum \eqref{eq:multinomial} by $I=\{k\in \ZZ_{\geq 0}^{n}\ \text{;}\ |k|=Q\}$. Let $\widehat{I}=\widehat{I}(l)$ denote the subset of $I$ composed by vectors of the form $(k_1,\ldots,k_{l},0,\ldots,0)$, for some $l\leq n$. Moreover, denote by $\hat{k}$ the projection of $k \in \ZZ_{\geq 0}^{n}$ onto its first $l$ entries. Then \eqref{eq:multinomial} becomes:
	\begin{equation*}
			h^Q=\sum_{k\in \widehat{I}} \binom{Q}{k}c_{a}^k x^{\E k}+\sum_{k\in I\setminus \widehat{I}} \binom{Q}{k}c_{a}^k x^{\E k}
			=\sum_{k\in \widehat{I}} \binom{Q}{\hat{k}}c_{a}^{\hat{k}} x^{\tE \hat{k}}+\sum_{k\in I\setminus \widehat{I}} \binom{Q}{k}c_{a}^k x^{\E k},
	\end{equation*}
	where $\tE$ is the matrix obtained from $\E$ by deleting the last $n-l$ columns. We will further split the first sum over $\widehat{I}$ by fixing a vector $v\in \RR_{\geq 0}^m$, namely
	\begin{equation*}
		h^Q
		=\sum_{\substack{k\in \widehat{I}\\\tE \hat{k}=v}} \binom{Q}{\hat{k}}c_{a}^{\hat{k}} x^{\tE \hat{k}}+\sum_{\substack{k\in \widehat{I}\\\tE \hat{k}\neq v}} \binom{Q}{\hat{k}}c_{a}^{\hat{k}} x^{\tE \hat{k}}+\sum_{k\in I\setminus \widehat{I}} \binom{Q}{k}c_{a}^k x^{\E k}.
	\end{equation*}
	In order to show the first part of the Proposition, we fix $i\in\{1,\ldots,t\}$ and use the previous formulation with $h=f_i$, $Q=p^e|\langle \rho_i \rangle_e|$ and $v=p^{e}\tE_i \langle \rho_i \rangle_e$, obtaining   
	\begin{align*}		
		f_i^{p^{e}|\langle \rho_i \rangle_e|}
		&=\binom{p^{e}|\langle \rho_i \rangle_e|}{p^{e}\langle \rho_{i,1} \rangle_e,\ldots,p^{e}\langle \rho_{i,l_i} \rangle_e}
		c_{a}^{\hat{k}}x^{p^{e}\tE_i \langle \rho_i \rangle_e}+\sum_{\substack{k\in \widehat{I}\\\tE_{i}\hat{k}\neq p^e\tE_{i}\langle \rho_i \rangle_e}}  \binom{p^{e}|\langle \rho_i \rangle_e|}{\hat{k}}c_{a}^{\hat{k}} x^{\tE_{i} \hat{k}}\\
		&+\sum_{k\in I\setminus \widehat{I}} \binom{p^{e}|\langle \rho_i \rangle_e|}{k}c_{a}^k x^{\E_{f_i} k}.
	\end{align*}
	The first term in the right hand side contains just one summand due to Lemma \ref{equal-vector} (2) with $\delta = \frac{1}{p^{e}}\hat{k}$. The first part will be proved once we show that the sum over $I\setminus \widehat{I}$ does not contribute to the monomial $x^{p^{e}\tE_i \langle \rho_i \rangle_e}$. Assume otherwise that there exists $k \in \RR^{n_i}$ verifying $p^{e}k \in I\setminus \widehat{I}$ and $\E_{f_i}k=\tE_i \langle \rho_i \rangle_e $. Suppose without loss of generality that $k=(k_1,\ldots,k_{l_i},k_{l_{i+1}},0,\ldots,0)$ with $k_{l_{i+1}}\neq 0$. Hence, $\E_{f_i}k=\tE_i \hat{k}+\E'k_{l_{i+1}}$, where $\E'$ is a column matrix. In fact, due to the construction of $\E_{\Bf}$, $\E'$ is a column in the exponent matrix of some $\{f_1,\ldots,f_{i-1}\}$, so we will safely assume that $\E'$ is the first column of $\E_{f_1}=\tE_1$. Consider now the following vector
	\[\gamma=(\langle \rho_1 \rangle_e+(k_{l_{i+1}},\overbrace{0,\ldots,0}^{l_1-1}),\langle \rho_{2} \rangle_e, \ldots, \langle \rho_{i-1}\rangle_e,\hat{k},\langle \rho_{i+1} \rangle_e,\ldots,\langle \rho_{t} \rangle_e ) \in \RR_{\geq 0}^N.
	\]
	Clearly $|\gamma|=|\langle \rho \rangle_e|$, and since $\tE_i \langle \rho_i \rangle_e =\tE_i \hat{k}+\E'k_{l_{i+1}}$, we get
	\begin{equation*}
		\E_{\Bf} \gamma =\sum_{\substack{j=1\\j\neq i}}^t \tE_j \langle \rho_j \rangle_e + \tE_i \hat{k}+\E'k_{l_{i+1}}=\E_{\Bf} \langle \rho \rangle_e.  
	\end{equation*}
	But then the first part of Lemma \ref{equal-vector} implies $\gamma= \langle \rho \rangle_e$, in particular $|\hat{k}|=|\langle \rho_i \rangle_e|$, which contradicts the fact that $|k|=|\langle \rho_i \rangle_e|$, since $k_{l_{i+1}}\neq 0$. 
	
	The proof for the coefficient of $g:=\prod_{i=1}^{t} f_i^{p^{e}|\langle \rho_i \rangle_e|}$ now follows from a straightforward calculation and again the first part of Lemma \ref{equal-vector}. Finally, similar considerations lead to the proof of the second part. 
\end{proof}

\begin{remark}\label{rmk:P_LvsP_i}
	Let $\SP_{\Bf}$ be the splitting polytope of $\Bf=(f_1, \ldots, f_t)$. Assume that the unique maximal point $\rho$ of $\SP_{\Bf}$  is written as $\rho=(\rho_1, \ldots, \rho_t)$ with $\rho_i \in \RR^{l_i}$. Definitions \ref{def:SpliPolf} and \ref{Def:ExpMat_SpliPol} yield $\rho_i\in\SP_{\widehat{f_i}}$, but in general, the `maximality' of a point is not preserved by `projection'. Consider for example the mapping $\Bf=(f,g)$ given by $f=x^{a}+xy^b$ and $g=x^{a}+yz^c$. Assume further that $a,c \geq 2$ and $b\geq 1$. In this case $l_1=2,\ l_2=1, N=3$, and  $\SP_{\Bf}$ coincides with the splitting polytope of Example \ref{Ex:SplitPol}. One has 
	$\E_{f}=
	\begin{pmatrix}
		a & 1 \\
		0 & b 
	\end{pmatrix}$ and $\SP_{f}=\{(\gamma_1,\gamma_2)\in \RR_{\geq 0}^2 \mid a\gamma_1+\gamma_2 \leq 1, \text{ and } b\gamma_2\leq 1\}$, see Figure \ref{Fig:ExSplitPol of f_1}. 
The maximal point of $\SP_{\Bf}$ is  $\left(\frac{bc-c+1}{abc},\frac{c-1}{bc},\frac{1}{c}\right)$ and its  `projection' on the first two coordinates is $\left(\frac{bc-c+1}{abc},\frac{c-1}{bc}\right)$. The latter point belongs to the boundary of $\SP_{f}$, but in general is not equal to its maximal point $\left(\frac{b-1}{ab},\frac{1}{b}\right)$. In Figure \ref{Fig:ExSplitPol of f_1}, the point $\left(\frac{bc-c+1}{abc},\frac{c-1}{bc}\right)$ is depicted in red color.

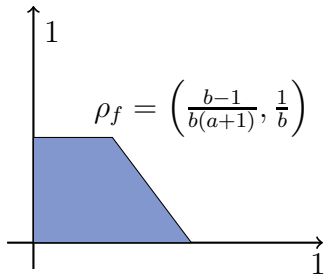
\begin{figure}[hb]
		\begin{tikzpicture}[scale=0.7]
			\draw[thick,black,->] (-0.5,0)--(5.5,0) node[right,below] {}; 
			\foreach \x/\xtext in {} 
			\draw[shift={(\x,0)},blue] (0pt,2pt)--(0pt,-2pt) node[below] {$\xtext$};
			%
			\foreach \y/\ytext in {}
			\draw[shift={(0,\y)},blue] (2pt,0pt)--(-2pt,0pt) node[left] {$\ytext$};
			\draw[thick,black,->] (0,-0.5)--(0,4.5) node[left,above] {}; 
			%
			\draw[black,fill=cyan!30!blue!50] (0,0) -- (3,0) -- (1.5,2)--(0,2)--(0,0);
			%
			\draw[fill=black] (3/2,2) circle[radius=2pt];
			\draw[fill=red] (2,4/3) circle[radius=2pt];
			\node[below] at (5.4,0) {$1$};
			
			\node[right] at (0,4) {$1$};
			\node[above] at (3.2,1.8) {$\rho_f=\left(\frac{b-1}{ab},\frac{1}{b}\right)$};
			\end{tikzpicture}
		\caption{Splitting polytope of $f=x^{a}+xy^b$.}
		\label{Fig:ExSplitPol of f_1}
	\end{figure}
\end{remark}

\subsection{Newton polyhedra of polynomial mappings}\label{SSec:Newton}
Our definition of $\SP_{\Bf}$ was inspired by the construction of the Newton polyhedron $\N_{\Bf}=\N_{(f_1, \ldots, f_t)}$  of a polynomial  mapping $\boldsymbol{f}=(f_1,\ldots, f_t)\, :\mathbb{k}^m\,\mapsto \mathbb{k}^t$ that we address next assuming the notations of Section \ref{Ssec:Spli Pol}.  

\begin{definition} \label{defiNewton}
	Let $A$ be a nonempty subset of $\mathbb{Z}_{\geq 0}^{m}$. The Newton polyhedron 
	$\N_A$ associated to $A$ is the convex hull in
	$\mathbb{R}_{\geq 0}^{m}$ of the set 
	$$\bigcup_{a\in A}\left(  a+\mathbb{R}_{\geq 0}^{m}\right).$$ 
\end{definition}
Classically one associates a Newton	polyhedron $\N_f$ to a polynomial $f=\sum_{a\in\mathbb{Z}_{\geq 0}^m} c_ax^a$ in the maximal ideal $\m$ of $R=\mathbb{k}[x_1, \ldots, x_m]$ by setting $A=\Supp(f)$. There are two customary notions of `genericity' for a fixed $\N_{f}$. We say that $f$ is \textit{non degenerated with respect to} $\N_f$ if for any compact face $\eta$ of $\N_f$ the system of equations
\begin{equation}\label{Eq:Nondeg}
	f_\eta=\partial f_\eta/\partial x_1=\cdots=\partial f_\eta/\partial x_m=0
\end{equation}
has no solution in $(\mathbb{k}^\times)^m$, where $f_\eta$ denotes the polynomial obtained from $f$ by restricting the support to $\eta$. For the second genericity condition, $\N_f$ is called \textit{convenient} when the support of $f$ contains a point in every coordinate axis. A third condition is given by Hernández \cite[Definition 29]{HDpolynomials} where he says that $\N_f$ is in \textit{diagonal position} if the ray spanned by the vector $\boldsymbol{1}_m$ intersects a compact face of $\N_f$. It is elementary to show, following Kouchnirenko's ideas \cite{Kou}, that this condition is also generic (in the sense of Kouchnirenko). Note moreover that every convenient $\N_f$ will be automatically in diagonal position. 

Consider now a polynomial  mapping  $\boldsymbol{f}=(f_1,\ldots, f_t)\, :\mathbb{k}^m\,\mapsto \mathbb{k}^t$. We denote by $\N_{\Bf}$ the Newton polyhedron of $\boldsymbol{f}=(f_1,\ldots, f_t)$, which is given by using Definition \ref{defiNewton} with  $A=\bigcup_{i=1}^t\Supp(f_i)=\bigcup_{i=1}^t\Supp(\hat{f}_i)$. Moreover we consider the ideal $\fa=\fa_{\boldsymbol{f}}=(f_1,\ldots, f_t)$ generated by the elements of the polynomial mapping $\Bf$ and define $\N_{\fa}$ as the Newton polyhedron of the set $A=\bigcup_{f\in\fa}\Supp(f)$. Obviously $\N_{\fa}=\N_{\Bf}$, which in particular shows that $\N_{\Bf}$ depends only on $\fa$ and not on a system of generators, see \cite[Section 2.1]{CNV15}. Discussions, comparisons and examples of these definitions (and the corresponding non degeneracy conditions that they imply) are given in \cite{BiFuSa,Tei,VeZu,LeVeZu,Ros,Ste}.

Some relations between $\N_{\fa}=\N_{\Bf}$ and $\SP_{\Bf}$ for principal ideals have been treated by Hernández in \cite[Section 4.4]{HDpolynomials}. It is not difficult to adapt those ideas for non-principal ideals as we discuss in the remainder of this Section.
\begin{proposition}\label{Prop:SplPolvsNewPol}
	The following equality holds
	$$\left\lbrace |\gamma|  \mid \gamma\in \SP_{\Bf} \setminus \{(0,\ldots,0)\} \right\rbrace=\left\{\tau \in \RR_{> 0}\mid (1/\tau,\ldots,1/\tau)\in \N_{\Bf} \right\}.$$ 
	In particular, $\max\left\lbrace |\gamma|  \mid \gamma\in \SP_{\Bf} \setminus \{(0,\ldots,0)\} \right\rbrace=\max\left\{\tau \in \RR_{> 0}\mid (1/\tau,\ldots,1/\tau)\in \N_{\Bf} \right\}.$
\end{proposition}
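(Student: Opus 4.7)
The plan is to establish each inclusion by converting a point $\gamma\in\SP_{\Bf}$ with $|\gamma|=\tau$ into a convex combination of the columns of $\E_{\Bf}$ that lies below $(1/\tau,\ldots,1/\tau)$ coordinatewise, and vice versa. The key observation is that $\N_{\Bf}$ is closed under adding $\RR_{\geq 0}^m$, since it is the convex hull of a family of sets of the form $a+\RR_{\geq 0}^m$, and such closure is preserved under convex hulls because $\RR_{\geq 0}^m$ is a convex cone.

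For the inclusion $\subseteq$, I would take $\gamma\in\SP_{\Bf}\setminus\{0\}$ with $|\gamma|=\tau>0$, and write $\E_{\Bf}\gamma=\sum_{j=1}^{N}\gamma_j a_j$, where $a_1,\ldots,a_N$ denote the columns of $\E_{\Bf}$. Setting $\mu_j=\gamma_j/\tau$ gives a convex combination (since $\sum_j\mu_j=1$), so $\sum_j\mu_j a_j$ lies in $\N_{\Bf}$, as each $a_j\in\Supp(\hat f_i)$ for some $i$ and thus in $\N_{\Bf}$. The defining inequality $\E_{\Bf}\gamma\preceq\mathbf{1}_m$ rescales to $\sum_j\mu_j a_j\preceq (1/\tau)\mathbf{1}_m$, so $(1/\tau)\mathbf{1}_m$ lies in $\sum_j\mu_j a_j+\RR_{\geq 0}^m\subseteq\N_{\Bf}$.

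For the reverse inclusion $\supseteq$, I start from $\tau>0$ with $(1/\tau)\mathbf{1}_m\in\N_{\Bf}$ and use the description of $\N_{\Bf}$ as a convex hull: write $(1/\tau)\mathbf{1}_m=\sum_{j=1}^{N}\mu_j(a_j+v_j)$ with $\mu_j\geq 0$, $\sum_j\mu_j=1$, and $v_j\in\RR_{\geq 0}^m$. Dropping the nonnegative contribution $\sum_j\mu_j v_j$ and multiplying through by $\tau$ yields $\tau\sum_j\mu_j a_j\preceq\mathbf{1}_m$, so the vector $\gamma\in\RR_{\geq 0}^N$ defined by $\gamma_j=\tau\mu_j$ satisfies $\E_{\Bf}\gamma\preceq\mathbf{1}_m$ and $|\gamma|=\tau$. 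Thus $\gamma\in\SP_{\Bf}\setminus\{0\}$ realizes $\tau$.

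The "in particular" clause is then immediate: since the two sets are equal (as subsets of $\RR_{>0}$), their suprema coincide, and both are attained, the one on the left because $\SP_{\Bf}$ is compact and $|\cdot|$ continuous, the one on the right because $\N_{\Bf}$ is closed. The only mild subtlety I would be careful about is the closedness of $\N_{\Bf}$ under adding $\RR_{\geq 0}^m$, which is needed to pass from $\sum_j\mu_j a_j\in\N_{\Bf}$ to $(1/\tau)\mathbf{1}_m\in\N_{\Bf}$ in the forward inclusion; this is really the only nontrivial geometric ingredient, and it is otherwise a straightforward bookkeeping argument.
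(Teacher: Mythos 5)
Your proposal is correct and follows essentially the same route as the paper: both directions come from rescaling $\gamma$ by $\tau=|\gamma|$ so that $(1/\tau)\mathbf{1}_m$ appears as a convex combination of columns of $\E_{\Bf}$ plus a nonnegative slack vector (the paper's $\delta$ with $\E_{\Bf}\gamma+\delta=\mathbf{1}_m$ plays the role of your orthant-stability observation), and conversely dropping the nonnegative parts of a convex-hull representation of $(1/\tau)\mathbf{1}_m$. The only difference is that you write out the reverse inclusion, which the paper dismisses as ``similar considerations,'' so nothing further is needed.
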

\begin{proof}
	For $\gamma \in \SP_{\Bf} \setminus \{(0,\ldots,0)\}$ we have $\E_{\Bf}\cdot \gamma  \preceq \textbf{1}_{m}$ and then we choose $\delta \in \RR_{\geq 0}^m$ such that $\E_{\Bf} \gamma + \delta = \textbf{1}_{m}$. By writing $\gamma=(\gamma_1,\ldots, \gamma_t) \in \RR^{l_1}\times\cdots\times\RR^{l_t}$, we get  
	\begin{equation*}
		\textbf{1}_{m}=\tE_1 \gamma_1 + \cdots + \tE_t \gamma_t + \delta=\sum_{j_1=1}^{l_1}\gamma_{1,j_1}a_{1,j_1}+ \cdots + \sum_{j_t=1}^{l_t}\gamma_{t,j_t}a_{t,j_t} + \delta.
	\end{equation*}
	Setting $\tau =|\gamma| >0$ yields  $\frac{1}{\tau} \textbf{1}_{m}=(1/\tau,\ldots,1/\tau)=\sum_{i=1}^{t}\sum_{j_i=1}^{l_i}\frac{\gamma_{i,j_i}}{\tau} a_{i,j_i}+\frac{1}{\tau}\delta$, but $a_{i,j_i} \in \bigcup_{i=1}^{t}\Supp(f_i)$ and $\sum_{i=1}^{t}\sum_{j_i=1}^{l_i}\frac{\gamma_{i,j_i}}{\tau}=\frac{|\gamma|}{\tau}=1$, which forces $(1/\tau,\ldots,1/\tau) \in \N_{\Bf}$. The other inclusion follows by similar considerations.
\end{proof}
Under the assumption that $\N_{\Bf}$  is in diagonal position, Proposition \ref{Prop:SplPolvsNewPol} offers a very useful tool for determining the maximal face of  the splitting polytope $\SP_{\Bf}$.
\begin{proposition}\label{Prop:maximal face}
Assume that $\N_{\Bf}$ is in diagonal position and let $\eta$ be the face of $\N_{\Bf}$ that is intersected by the ray spanned by $\textbf{1}_{m}$.  Then the maximal face of $\SP_{\Bf}$ is given by $$\{\gamma \in \RR_{\geq 0}^{N} \mid \E_{\Bf}\cdot \gamma = \textbf{1}_{m} \;\text{and}\; \gamma_{j}=0 \; \text{if the j-th column of } \; \E_{\Bf} \; \text{does not belongs to} \; \eta\}.$$ 
\end{proposition}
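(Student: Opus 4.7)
The plan is to realize the maximal face of $\SP_{\Bf}$ as the set on which a suitable linear functional achieves its common optimum on both $\SP_{\Bf}$ and $\N_{\Bf}$. Let $M=\max\{|\gamma| \mid \gamma\in \SP_{\Bf}\}$. By Proposition \ref{Prop:SplPolvsNewPol}, we have $(1/M,\ldots,1/M)\in \N_{\Bf}$ and $M$ is maximal with this property, so the ray spanned by $\textbf{1}_m$ exits $\N_{\Bf}$ precisely at the point $p_0=(1/M,\ldots,1/M)$, which by hypothesis lies on the compact face $\eta$.

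Next, I would invoke the standard fact that every compact face of a Newton polyhedron admits a strictly positive supporting vector: there exists $w\in \RR_{>0}^m$ and $c\in\RR$ such that $\langle w,x\rangle \geq c$ for all $x\in \N_{\Bf}$, with equality exactly on $\eta$. Since $p_0\in \eta$, evaluating gives $\langle w,\textbf{1}_m\rangle = M\langle w,p_0\rangle = Mc$; note $c>0$ because $\N_{\Bf}\subseteq \RR_{\geq 0}^m\setminus\{0\}$ (the origin cannot lie in $\N_{\Bf}$ as all generators of $\fa$ belong to $\m$). Now take any $\gamma\in \SP_{\Bf}$ and write $\textbf{1}_m = \E_{\Bf}\gamma + \delta$ with $\delta\in \RR_{\geq 0}^m$. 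Applying $w$ and using that each column $a_{i,j}$ of $\E_{\Bf}$ belongs to $\N_{\Bf}$ yields
\begin{equation*}
Mc = \langle w,\textbf{1}_m\rangle = \sum_{i,j}\gamma_{i,j}\langle w,a_{i,j}\rangle + \langle w,\delta\rangle \geq c\sum_{i,j}\gamma_{i,j} + \langle w,\delta\rangle \geq c|\gamma|.
\end{equation*}

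Finally, I would analyze when equality holds throughout. Since $w$ has strictly positive entries and $\delta\in \RR_{\geq 0}^m$, the term $\langle w,\delta\rangle$ vanishes iff $\delta=0$, i.e., iff $\E_{\Bf}\gamma=\textbf{1}_m$; and the inequality $\langle w,a_{i,j}\rangle\geq c$ is an equality exactly when $a_{i,j}\in\eta$, so $\sum_{i,j}\gamma_{i,j}\langle w,a_{i,j}\rangle = c|\gamma|$ iff $\gamma_{i,j}=0$ whenever $a_{i,j}\notin \eta$. Therefore $|\gamma|=M$ (i.e., $\gamma$ lies in the maximal face) if and only if $\E_{\Bf}\gamma=\textbf{1}_m$ and $\gamma_j=0$ for every column of $\E_{\Bf}$ outside $\eta$, which is the claimed description. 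The only mildly delicate point is the existence of the strictly positive supporting vector $w$ for the compact face $\eta$; this is classical for Newton polyhedra of convenient or diagonal type, and I would cite it rather than reprove it.
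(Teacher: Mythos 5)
Your argument is correct, and it is essentially the approach behind the paper's proof, which simply defers to the principal-ideal case \cite[Lemma 32]{HDpolynomials}: one bounds $|\gamma|$ by pairing $\mathbf{1}_{m}=\E_{\Bf}\gamma+\delta$ against a strictly positive linear functional exposing $\eta$, and reads off the equality conditions, exactly as you do. The one point you flag as delicate is in fact automatic and needs no citation: any functional exposing a compact face of $\N_{\Bf}$ must have strictly positive entries, since a negative entry would make it unbounded below on $\N_{\Bf}=\operatorname{conv}\bigl(\bigcup_a (a+\RR_{\geq 0}^m)\bigr)$ and a zero entry would force the face to contain a coordinate ray, contradicting compactness.
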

\begin{proof}
	The proof is an easy variation of the one given for the case of a principal ideal \cite[Lemma 32]{HDpolynomials}.
\end{proof}
\begin{example}\label{Ex:NewtPolToMaxFace}
	Let $\Bf$ be the polynomial mapping given by $(y^{5}(x+y^m)^4,x^n)$ with $m\geq 1$ and $n > 4+5/m$. Then 
	$\E_{\Bf}=
		\begin{pmatrix}
			4 & 3 & 2 & 1 & 0 & n\\
			5 & m+5 & 2m+5 & 3m+5 & 4m+5 & 0
		\end{pmatrix},    
	$
	and $\SP_{\Bf}=\{(\gamma_1,\gamma_2,\gamma_3,\gamma_4,\gamma_5,\gamma_6)\in \RR_{\geq 0}^6 \mid 4\gamma_1+3\gamma_2+2\gamma_3+\gamma_4+n\gamma_6\leq 1, \text{ and }
	5\gamma_1+(m+5)\gamma_2+(2m+5)\gamma_3+(3m+5)\gamma_4+(4m+5)\gamma_5\leq 1 \}$. The ray spanned by the vector $(1,1)$ intersects $\N_{\Bf}$ in the face formed by the line segment joining $(4,5)$ and $(n,0)$. Proposition \ref{Prop:maximal face}  gives that $\SP_{\Bf}$ has a unique maximal point with equations 
	$\{(\gamma_1,0,0,0,0,\gamma_6)\in \RR_{\geq 0}^6 \mid 4\gamma_1+n\gamma_6 = 1 \text{ and }	5\gamma_1= 1\}=\{(1/5,0,0,0,0,1/5n)\}.$ Note that $\left|(\frac{1}{5},0,0,0,0,\frac{1}{5n})\right|=\frac{1}{5}+\frac{1}{5n}$ which equals $\lct((y^{5}(x+y^m)^4,x^n))$, according to \cite[Theorem 6.5]{CNV15}.
\end{example}


\section{$F$-pure thresholds of non-principal ideals}\label{Sec:Sec3}

In this section we present two of the main results of this work Theorems~\ref{Thm:F-pure-threshold-polytope} and~\ref{Thm:fpt-equals-lct}. A relevant role is played here by the term ideal of a given ideal $\fa$ which is presented in Definition~\ref{Def:Term Ideal}. Then we discuss the relations between the log canonical threshold and the $F$-pure threshold of $\fa$. Several examples are presented to illustrate the scope of the discussed techniques.

\subsection{$F$-pure thresholds and term ideals }

Take a polynomial ring $R$ over a field $\mathbb{k}$ of prime characteristic $p$, and set $\m=(x_1, \ldots, x_m)$ for the homogeneous maximal ideal of $R$. For $e\geq 1$ we denote by $\m^{[p^{e}]}$ the $p^{e}$-th Frobenius power of $\m$, i.e. the ideal $(x_1^{p^e}, \ldots, x_m^{p^e})$.

\begin{definition}\label{Def:fpt}
	For a nonzero ideal $\fa \subseteq \m$, the following integer is well defined 
	\[\nu_{\mathfrak{a}}(p^e)= \max \{c \in \mathbb{Z}_{\geq 0} \mid \mathfrak{a}^c \not \subseteq \m^{[p^{e}]}\}.
	\]
	We also define the $F$-pure threshold of $\mathfrak{a}$ (at the origin) by
	\[\fpt(\mathfrak{a})= \lim\limits_{e \rightarrow \infty} {\frac{\nu_{\mathfrak{a}}(p^e)}{p^e}}.
	\]
\end{definition}
The existence of the previous limit is proved in \cite{MTW}, while the non obvious fact that $\fpt(\fa)$ is a rational number is proved in \cite{BMS-MMJ}. 
\begin{example}\label{Ex:Fthrsholds}
 It is not difficult to show that if $\fa$ defines a non singular variety of codimension $r$, then $\fpt(\fa) = r$. Also it is known \cite{How,HaYo} that for a monomial ideal $\fa$ its $F$-pure threshold is given by the formula 
 \[\fpt(\fa)=\max \{\tau \in \RR_{\geq 0}\mid \boldsymbol{1}_m \in \tau\cdot \N_{\fa}\}.\] 
 This is exactly the same number obtained by computing the log canonical threshold of the monomial ideal (in characteristic zero, lets say over $\QQ$) generated by the monomials in $\fa$. The equality is a consequence of the stronger fact that in the monomial case, the multiplier and test ideals of $\fa$ coincide.
 \end{example}


\begin{definition}[\cite{BiFuSa}]\label{Def:Term Ideal}
	For a nonzero ideal $\fa \subseteq \m$, the term ideal $\fa^\circ$ of $\fa$ is the ideal generated by all the monomials $x^a$ with $a\in \N_{\fa}$.
\end{definition}

Take a non necessarily principal ideal $\fa\subseteq \m$, and consider the polynomial mapping $\Bf=(f_1,\ldots,f_t)$ formed by a list of minimal generators of $\fa$. 
\begin{proposition}\label{Prop:Properties ftt ideals}The following properties hold.
	\begin{enumerate}
		\item $\fpt(\fa)\leq \min\{t, \fpt(\fa^\circ)\}$.
		\item $\fpt(\fa^\circ)=\max_{\gamma\in \SP_{\Bf} } |\gamma|.$
	\end{enumerate}
\end{proposition}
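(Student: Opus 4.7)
The plan is to prove the two assertions separately, combining basic facts about the $F$-pure threshold with the bridge between $\SP_{\Bf}$ and $\N_{\Bf}$ established in Proposition~\ref{Prop:SplPolvsNewPol}, plus Howald's formula recalled in Example~\ref{Ex:Fthrsholds}. Neither part looks like a deep obstruction; the work is in correctly assembling ingredients already in place.

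For part (1), I split the bound into $\fpt(\fa)\le t$ and $\fpt(\fa)\le\fpt(\fa^\circ)$. For the second inequality, I first verify the containment $\fa\subseteq\fa^\circ$: each $f_i=\sum_a c_a x^a$ has $\Supp(f_i)\subseteq\N_{\fa}$, and every such $x^a$ is by definition a generator of $\fa^\circ$, so $f_i\in\fa^\circ$. Then the standard monotonicity $\fb\subseteq\fc\Rightarrow\nu_\fb(p^e)\le\nu_\fc(p^e)\Rightarrow\fpt(\fb)\le\fpt(\fc)$ yields $\fpt(\fa)\le\fpt(\fa^\circ)$. For the bound $\fpt(\fa)\le t$, I use the Frobenius (``Freshman's dream'') observation: since each $f_i\in\m$, writing $f_i=\sum c_a x^a$ with $a\ne 0$ gives $f_i^{p^e}=\sum c_a^{p^e}x^{p^e a}\in\m^{[p^e]}$. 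Therefore any product $f_1^{a_1}\cdots f_t^{a_t}$ with some $a_i\ge p^e$ lies in $\m^{[p^e]}$. Consequently, if $\fa^c\not\subseteq\m^{[p^e]}$, the generator of $\fa^c$ avoiding $\m^{[p^e]}$ must have every $a_i\le p^e-1$, so $c\le t(p^e-1)$. Dividing by $p^e$ and passing to the limit gives $\fpt(\fa)\le t$.

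For part (2), I apply Howald's formula (Example~\ref{Ex:Fthrsholds}) to the monomial ideal $\fa^\circ$:
\[
\fpt(\fa^\circ)=\max\{\tau\in\RR_{\ge 0}\mid \boldsymbol{1}_m\in \tau\cdot \N_{\fa^\circ}\}.
\]
The crucial identification $\N_{\fa^\circ}=\N_{\fa}=\N_{\Bf}$ holds because the generators of $\fa^\circ$ are precisely the monomials with exponents in $\N_{\fa}$, so they generate the same Newton polyhedron as $\fa$. Rewriting ``$\boldsymbol{1}_m\in\tau\cdot\N_{\Bf}$'' as ``$(1/\tau,\ldots,1/\tau)\in\N_{\Bf}$'' for $\tau>0$, Proposition~\ref{Prop:SplPolvsNewPol} gives
\[
\fpt(\fa^\circ)=\max\{\tau>0\mid (1/\tau,\ldots,1/\tau)\in\N_{\Bf}\}=\max_{\gamma\in\SP_{\Bf}}|\gamma|,
\]
which is exactly the claim.

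The only step requiring genuine care is the verification that $f_i^{p^e}\in\m^{[p^e]}$ when $f_i\in\m$; this is where the prime characteristic hypothesis enters decisively, but it is routine. Everything else is assembly of earlier material, so I expect no real obstruction.
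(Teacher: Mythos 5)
Your proof is correct and follows essentially the paper's route: part (2) is exactly the intended combination of Howald's formula (Example~\ref{Ex:Fthrsholds}), the identification $\N_{\fa^\circ}=\N_{\fa}=\N_{\Bf}$, and Proposition~\ref{Prop:SplPolvsNewPol}. For part (1) the paper simply cites \cite{MTW}, and your direct verification (monotonicity of $\fpt$ from $\fa\subseteq\fa^\circ$ together with the Frobenius/pigeonhole bound $\nu_{\fa}(p^e)\le t(p^e-1)$) is a correct, self-contained substitute for that citation.
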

\begin{proof}
	The first property is well known, see e.g. \cite{MTW}. The second property follows from the discussion in Example \ref{Ex:Fthrsholds} and Proposition  \ref{Prop:SplPolvsNewPol}, alternatively one may adapt the proof for a principal ideal \cite[Proposition $36$]{HDpolynomials}.
\end{proof}
Our next goal is to make explicit the conditions required to get $\fpt(\fa)=\fpt(\fa^\circ)$. Our formulation extends previous results only available for principal ideals \cite[Theorem 42]{HDpolynomials}  and \cite[Theorem 4.1]{HDbinomial}. 
\begin{definition}\label{Def:S-i}
	Given a vector $\rho=(\rho_1, \ldots, \rho_t)$ with $\rho_i \in \RR^{l_i}$, we set for $i\in\{1,\ldots,t\}$
	$$S_i=\sup \left\{\ell\in\mathbb{Z}_{\geq 0} \mid \sum_{j=1}^{l_i} \rho_{i,j}^{(e)}  \leq p-1\quad\text{for every}\quad 0 \leq e \leq \ell  \right\}.$$ 
\end{definition}

\begin{theorem} \label{Thm:F-pure-threshold-polytope}
	Let $\fa \subseteq \m$ be a nonzero ideal with term ideal $\fa^\circ$, and take a list $\Bf=(f_1,\ldots,f_t)$  of minimal generators of $\fa$. Suppose that $\SP_{\Bf}$ has a unique maximal point $\rho=(\rho_1, \ldots, \rho_t)$ with $\rho_i \in \RR^{l_i}$.
	
	\noindent If $I$ denotes the set of indices for which $S_i$ is finite, then the following assertions hold.
	\begin{enumerate}
		\item If $I= \emptyset$ then $\fpt(\fa)=\fpt(\fa^\circ)$. 
		\item If $I \ne \emptyset$ then 
		\[
			\fpt(\fa) \geq \sum_{i \in \{1,\ldots,t\} \setminus I}|\rho_i| + \sum_{i \in I}\left(|\langle \rho_i \rangle_{S_i}|+\frac{1}{p^{S_i}}\right).
		\]
	\end{enumerate}
\end{theorem}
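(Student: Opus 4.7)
The plan is to produce matching bounds on $\fpt(\fa)$. Proposition~\ref{Prop:Properties ftt ideals}(1)--(2) already delivers the upper bound $\fpt(\fa)\le\fpt(\fa^\circ)=|\rho|$ in part~(1), so both parts reduce to exhibiting, for every sufficiently large $e$, an element of a controlled Frobenius power of $\fa$ that escapes $\m^{[p^e]}$. The uniform strategy is to build a product $g_e=\prod_{i=1}^t f_i^{p^e|v_{i,e}|}\in\fa^{p^e|v_e|}$ with $v_e=(v_{1,e},\ldots,v_{t,e})\in\tfrac{1}{p^e}\ZZ_{\ge 0}^N$, apply Proposition~\ref{propoBinomio}(2) to pick out the coefficient of the monomial $x^{p^e\E_{\Bf}v_e}$ in $g_e$, and check two things: that this coefficient is a unit of $\mathbb{F}_p$ (i.e.\ every relevant multinomial is nonzero modulo $p$), and that $(\E_{\Bf}v_e)_k<1$ for every coordinate $k$ (so that the monomial lies outside $\m^{[p^e]}$). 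That yields $\nu_{\fa}(p^e)\ge p^e|v_e|$, and sending $e\to\infty$ produces the asserted lower bound.

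For part~(1), $I=\emptyset$ forces $\sum_j\rho_{i,j}^{(\ell)}\le p-1$ for every $i$ and every $\ell$, so the natural choice $v_{i,e}=\langle\rho_i\rangle_e$ is immediately compatible with Lucas: by Corollary~\ref{Cor:adding wo carrying} each factor $\binom{p^e|\langle\rho_i\rangle_e|}{p^e\langle\rho_i\rangle_e}$ is a unit in $\mathbb{F}_p$, and Proposition~\ref{propoBinomio}(1) exhibits the desired monomial coefficient as a nonzero product. The strict inequality $(\E_{\Bf}\langle\rho\rangle_e)_k<1$ on every row $k$ is verified by a case split: on a row not tight at $\rho$, monotonicity gives $(\E_{\Bf}\langle\rho\rangle_e)_k\le(\E_{\Bf}\rho)_k<1$; on a row tight at $\rho$, the identity $(\E_{\Bf}\rho)_k=1$ forces some $j$ with $(\E_{\Bf})_{kj}>0$ and $\rho_j>0$, and the nonterminating-expansion inequality $\langle\rho_j\rangle_e<\rho_j$ then produces the strict drop $(\E_{\Bf}\langle\rho\rangle_e)_k<1$. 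Passing to $e\to\infty$ gives $\fpt(\fa)\ge|\rho|$, matching the upper bound.

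Part~(2) is more delicate: for $i\in I$ and $e>S_i$, the truncation $\langle\rho_i\rangle_e$ already has some level-$\ell$ digit-sum exceeding $p-1$, so its multinomial vanishes modulo $p$ and cannot be used. I would exploit the definition of $S_i$, namely $\sum_j\rho_{i,j}^{(S_i+1)}\ge p$, to pick a coordinate $k_i$ with $\rho_{i,k_i}^{(S_i+1)}\ge 1$, and then concentrate all of the extra mass at that coordinate: define $v_{i,e,j}=\langle\rho_{i,j}\rangle_{S_i}$ for $j\ne k_i$ and $v_{i,e,k_i}=\langle\rho_{i,k_i}\rangle_{S_i}+(1/p^{S_i}-1/p^e)$; for $i\notin I$ keep $v_{i,e}=\langle\rho_i\rangle_e$ as before. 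A digit-by-digit inspection of the terminating $p$-expansions shows that the digits of the entries of $v_{i,e}$ sum, at each level $\ell$, either to $\sum_j\rho_{i,j}^{(\ell)}\le p-1$ (for $\ell\le S_i$, by the definition of $S_i$) or to exactly $p-1$ (for $S_i<\ell\le e$), so Lucas's lemma applies and Proposition~\ref{propoBinomio}(2) delivers a nonzero coefficient. The coordinate-wise inequality $(\E_{\Bf}v_e)_k<1$ is checked by the same tight/non-tight row dichotomy as in part~(1), now accounting for the bump in coordinate $k_i$. As $e\to\infty$, $|v_e|$ tends to $\sum_{i\notin I}|\rho_i|+\sum_{i\in I}(|\langle\rho_i\rangle_{S_i}|+1/p^{S_i})$, which is the claimed bound.

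The main obstacle is the construction of $v_{i,e}$ for $i\in I$ in part~(2). Choosing $k_i$ so that simultaneously (a) no digit carry occurs in the $p$-expansions of the entries of $v_{i,e}$ and (b) the bump at $k_i$ does not push $(\E_{\Bf}v_e)_k$ all the way up to $1$ on any tight row of $\E_{\Bf}$ requires a careful digit-level bookkeeping. The hypothesis $\rho_{i,k_i}^{(S_i+1)}\ge 1$, which the very definition of $S_i$ guarantees at some coordinate, is precisely what makes both conditions hold; the delicate point is balancing the added mass $1/p^{S_i}-1/p^e$ against the ``reserve'' $\sum_{\ell>S_i}\rho_{i,k_i}^{(\ell)}/p^\ell$ available in $\rho_{i,k_i}$ beyond its level-$S_i$ truncation.
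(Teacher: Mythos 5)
Your part (1) is essentially the paper's argument (take $v=\langle\rho\rangle_e$, apply Proposition~\ref{propoBinomio}(1), Lucas via Corollary~\ref{Cor:adding wo carrying}, check the exponent stays coordinatewise below $p^e$, and pass to the limit), and it is fine. The genuine gap is in your construction of $v_{i,e}$ for $i\in I$ in part (2). You concentrate the whole added mass $1/p^{S_i}-1/p^e$ in a single coordinate $k_i$ with $\rho_{i,k_i}^{(S_i+1)}\geq 1$, and you assert that this hypothesis makes everything balance. It does not: the reserve it guarantees is only $\rho_{i,k_i}-\langle\rho_{i,k_i}\rangle_{S_i}\geq 1/p^{S_i+1}$, which is a factor of $p$ smaller than the bump $\approx 1/p^{S_i}$. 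Consequently $v_{i,e}\not\preceq\langle\rho_i\rangle_e$ (indeed $v_{i,e,k_i}$ can exceed $\rho_{i,k_i}$ itself): already for $\rho_i=(1/3,1/3)$ and $p=2$ (as in Example~\ref{Example fptIdeal}) one has $S_i=1$, $\langle\rho_i\rangle_{1}=(0,0)$ and $v_{i,e,k_i}=1/2-1/2^e>1/3$. This breaks the proof at two points. First, Proposition~\ref{propoBinomio}(2) -- and the underlying uniqueness statement, Lemma~\ref{equal-vector} -- requires $v\preceq\langle\rho\rangle_e$; without it you cannot conclude that the coefficient of $x^{p^e\tE_i v_{i,e}}$ in $f_i^{p^e|v_{i,e}|}$ is the single multinomial term, so even though Lucas makes that multinomial a unit, other terms of the expansion may hit the same exponent vector and cancel it mod $p$ (your Lucas digit count only controls one summand, not the coefficient). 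Second, since $v_{i,e}$ is no longer dominated by $\rho_i$, the inequality $(\E_{\Bf}v_e)_k<1$ on rows that are tight at $\rho$ is not obtained by the ``same dichotomy as in part (1)''; nothing in your setup rules out the bumped column pushing a tight row to $1$ or beyond.

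The paper avoids exactly this by spreading, not concentrating, the mass: since $\sum_j\rho_{i,j}^{(S_i+1)}\geq p$, it chooses digits $n_{i,j}$ with $0\leq n_{i,j}\leq\rho_{i,j}^{(S_i+1)}$, $\sum_j n_{i,j}=p-1$, and at least one strict inequality, say $n_{i,1}<\rho_{i,1}^{(S_i+1)}$, and then places the all-$(p-1)$ deeper tail only in that strictly smaller coordinate. The saved unit at level $S_i+1$ (worth $1/p^{S_i+1}$) dominates the appended tail $\sum_{\ell\geq S_i+2}(p-1)/p^{\ell}$, so the resulting $v_i$ satisfies $v_i\preceq\langle\rho_i\rangle_{S_i+e}$ componentwise while $|v_i|=|\langle\rho_i\rangle_{S_i}|+(p^e-1)/p^{S_i+e}$ still tends to $|\langle\rho_i\rangle_{S_i}|+1/p^{S_i}$; this simultaneously restores the hypotheses of Proposition~\ref{propoBinomio}(2), the Lucas digit bounds, and the escape from $\m^{[p^{S+e}]}$. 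If you want to repair your write-up, replace the single-coordinate bump by this digit redistribution (or prove a substitute for Lemma~\ref{equal-vector} without the domination hypothesis, which is false in general).
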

\begin{proof}
	For the case $I= \emptyset$, it will be enough to show that $|\rho| \leq \fpt(\fa)$, according to Proposition~\ref{Prop:Properties ftt ideals}. Our first goal is to find an exponent $Q(e)$ such that $\fa^{Q(e)} \nsubseteq  \m^{[p^{e}]}$. In order to check that this exponent is $p^{e} |\langle \rho \rangle_e|$ we start by recalling from Proposition~\ref{propoBinomio}(1) that
	\begin{equation}\label{Eq:Monomial in g}
		\prod_{i=1}^{t}\binom{p^{e}|\langle \rho_i \rangle_e|}{p^{e}\langle \rho_i \rangle_e}c_{a(i)}^{b_{e}(i)}x^{p^{e}\E_{\Bf} \langle \rho \rangle_e}
	\end{equation}
	appears as a summand of the polynomial  $g=\prod_{i=1}^{t} f_i^{p^{e}|\langle \rho_i \rangle_e|}$. Since $I=\emptyset$,  Corollary \ref{Cor:adding wo carrying} implies $\binom{p^{e}|\langle \rho_i \rangle_e|}{p^{e}\langle \rho_i \rangle_e} \not \equiv 0$ mod $p$ for each $\rho_i$, making the coefficient of \eqref{Eq:Monomial in g} not zero and thus $p^{e}\E_{\Bf} \langle \rho \rangle_e \in \Supp(g)$. However, from $\langle \rho \rangle_e \prec \rho$ we have 
	\[p^{e}\E_{\Bf} \langle \rho \rangle_e \prec p^{e}\E_{\Bf} \rho \preceq p^{e}\textbf{1}_m,
	\]
	meaning that  $x^{p^{e}\E_{\Bf} \langle \rho \rangle_e}$ does not belong to 
	 $\m^{[p^{e}]}$. In turn this implies that $g \notin \m^{[p^{e}]}$ and finally $\fa^{p^{e} |\langle \rho \rangle_e|} \nsubseteq  \m^{[p^{e}]}$ because $g$ was already a member of $\fa^{p^{e}|\langle \rho \rangle_e|}$. We conclude that $|\langle \rho \rangle_e| \leq \frac{\nu_{\fa}(p^{e})}{p^{e}}$, which take us to $|\rho| \leq \fpt(\fa)$ by taking the limit as $e \to \infty$.
	
	We now address the second part of the proof. We assume without lost of generality that $I=\{1,\ldots,r\}$ and define $S=\max_{1 \leq i \leq r}S_i$.  
	Our strategy here is to find again a suitable exponent $Q(e)$ verifying $\fa^{Q(e)} \nsubseteq  \m^{[p^{e}]}$, but in this case the construction of the corresponding monomial is more involved. Note first that the definition of $S_i$ gives
\begin{equation*}
\begin{matrix}
\rho_{1,1}^{(S_1+1)}+ \cdots + \rho_{1,l_1}^{(S_1+1)} \geq p\\
\rho_{2,1}^{(S_2+1)}+ \cdots + \rho_{2,l_2}^{(S_2+1)} \geq p\\
\vdots \\
\rho_{r,1}^{(S_r+1)}+ \cdots + \rho_{r,l_r}^{(S_r+1)} \geq p.
\end{matrix}
\end{equation*}
There should exist then non negative integers $n_{1,1}, \ldots, n_{1,l_1},$  $n_{2,1}, \ldots, n_{2,l_2},$ $\ldots, n_{r,1}, \ldots, n_{r,l_r}$ such that
\begin{equation*}
\begin{matrix}
n_{1,1}+ \cdots + n_{1,l_1} = p-1 &  \mathrm{and}& 0 \leq n_{1,j_1} \leq \rho_{1,j_1}^{(S_1+1)}\\
n_{2,1}+ \cdots + n_{2,l_2}= p-1 &  \mathrm{and}&0 \leq n_{2,j_2} \leq \rho_{2,j_2}^{(S_2+1)}\\
\vdots & \vdots & \vdots\\
n_{r,1}+ \cdots + n_{r,l_r} = p-1 &  \mathrm{and}& 0 \leq n_{r,j_r} \leq \rho_{r,j_r}^{(S_r+1)},
\end{matrix}
\end{equation*}
with at least one index in each row verifying the strict inequality, say $n_{i,1}<\rho_{i,1}^{(S_i+1)}$. 
For $e$ big enough we define the vectors, 
\[
\begin{cases}
		v_1=\langle \rho_1 \rangle_{S_1}+\left(\frac{n_{1,1}}{p^{S_1+1}}+\frac{p-1}{p^{S_1+2}}+ \cdots + \frac{p-1}{p^{S_1+e}},\frac{n_{1,2}}{p^{S_1+1}}, \ldots, \frac{n_{1,l_1}}{p^{S_1+1}} \right)\\
		v_2=\langle \rho_2 \rangle_{S_2}+\left(\frac{n_{2,1}}{p^{S_2+1}}+\frac{p-1}{p^{S_2+2}}+ \cdots + \frac{p-1}{p^{S_2+e}},\frac{n_{2,2}}{p^{S_2+1}}, \ldots, \frac{n_{2,l_2}}{p^{S_2+1}} \right)\\
         \vdots \\
		v_r=\langle \rho_r \rangle_{S_r}+\left(\frac{n_{r,1}}{p^{S_r+1}}+\frac{p-1}{p^{S_r+2}}+ \cdots + \frac{p-1}{p^{S_r+e}},\frac{n_{r,2}}{p^{S_r+1}}, \ldots, \frac{n_{r,l_r}}{p^{S_r+1}} \right).
\end{cases}
\] 
In this way $v =(v_1,\ldots,v_r) \in \RR_{\geq 0}^{l_1+\cdots+l_r}$ verifies $p^{S+e}v \in \ZZ_{\geq 0}^{l_1+\cdots+l_r}$ and moreover
\begin{align*}
|v|&=\sum_{i=1}^{r}\left(|\langle \rho_i \rangle_{S_i}|+\frac{p^{e}-1}{p^{S_i+e}} \right).
\end{align*}
Observe also that  for $i\in\{1,\ldots,r\}$, $v_i \preceq \langle \rho_i \rangle_{S_i+e} \preceq \langle \rho_i \rangle_{S+e}$ and by Proposition~\ref{propoBinomio}(2) 
\[\prod_{i=1}^{r}\binom{p^{S+e}|v_i|}{p^{S+e}v_i}c_{a(i)}^{d_{e}(i)} \cdot  \prod_{i=r+1}^{t}\binom{p^{S+e}|\langle \rho_i \rangle_{S+e}|}{p^{S+e}\langle \rho_i \rangle_{S+e}}c_{a(i)}^{b_{e}(i)}
\]
appears as the coefficient of the monomial 
\[(x^{p^{S+e}  \widehat{\E}_1 v_1}  \cdots  x^{p^{S+e}\widehat{\E}_r v_r})\cdot (  x^{p^{S+e}\widehat{\E}_{r+1} \langle \rho_{r+1} \rangle_{S+e}}\cdots   x^{p^{S+e}\widehat{\E}_t \langle \rho_t \rangle_{S+e} })
\]
in the polynomial $g$ given by the product 
$\prod_{i=1}^{r} f_i^{p^{S+e}|v_i|} \cdot \prod_{i =r+1}^t  f_i^{p^{S+e}|\langle \rho_i \rangle_{S+e}|}$. By the definitions of  $S$ and $S_i$, we conclude from Lucas's Lemma that $\binom{p^{S+e}|v_i|}{p^{S+e}v_i} \not \equiv 0$ mod $p$ for each $v_i$. Since $S_i=\infty$, 
for $ i \in \{r+1,\ldots, t\}$, then $\binom{p^{S+e}|\langle \rho_i \rangle_{S+e}|}{p^{S+e}\langle \rho_i \rangle_{S+e}} \not \equiv 0$ mod $p$. In conclusion one can set $Q(e)=p^{S+e} |v| +p^{S+e}|\langle( \rho_{r+1},\ldots, \rho_t) \rangle_{S+e}|$ to verify that 
$g \in  \fa^{Q(e)}$ but $g \notin \m^{[p^{S+e}]}$. In this way $Q(e)\leq \nu_{\fa}(p^{S+e})$. Dividing by $p^{S+e}$ and letting $e \rightarrow \infty$ yields finally  
\[\sum_{i=1}^{r}\left(|\langle \rho_i \rangle_{S_i}|+  \frac{1}{p^{S_i}}\right) + \sum_{i=r+1}^t|\rho_i| \leq \fpt(\fa).
\]
\end{proof}

\begin{example}\label{Example fptIdeal}
Take $R=\FF_p[x,y,z]$ and let $\fa$ be the ideal generated by $\Bf=(x^2+xy^2,yz^3)$. Then $\SP_{\Bf}$ is the splitting polytope of Example \ref{Ex:SplitPol} with $a=b=2$, and $c=3$ which has a unique maximal point $\rho=\left(\frac{1}{3} ,\frac{1}{3}, \frac{1}{3} \right)$. 
For $p=2$, 
\[
\frac{1}{3}=\sum_{e\geq 1} \frac{\alpha^{(e)}}{2^{e}},\quad \text{where }\, \alpha^{(e)}=0 \, \text{ for }\,  e \text{ odd and }\,  \alpha^{(e)}=1 \text{ for even } e,
\]
thus $S_1=1,\; S_2=\infty$ and $\langle \rho_1 \rangle_1=(0,0)$. By Theorem~\ref{Thm:F-pure-threshold-polytope}(2)  $\fpt(\fa)\geq |\langle \rho_1 \rangle_1|+\frac{1}{2}+\frac{1}{3}=\frac{5}{6}$. 
If $p=3$, then $\frac{1}{3}=\sum_{e\geq 2} \frac{2}{3^{e}}$, again $S_1=1,\; S_2=\infty$ and $\fpt(\fa)\geq \frac{2}{3}$.

Assume now that $p=6w+1$ for some $w\geq 1$, then $\frac{1}{3}=\sum_{e\geq 1} \frac{2w}{p^{e}}$ and $S_1=S_2=\infty$, hence $\fpt(\fa)= | \rho|=1$ by Theorem~\ref{Thm:F-pure-threshold-polytope}(1). Finally if $p=6w+5$ for some $w\geq 0$, then $\frac{1}{3}=\sum_{e\geq 1} \frac{\alpha^{(e)}}{p^{e}}$, where $\alpha^{(e)}=2w+1$ for $e$ odd and  $\alpha^{(e)}=4w+3$ for $e$ even. In this case $S_1=1,\;S_2=\infty$ and $\langle \rho_1 \rangle_1=\left(\frac{2w+1}{p},\frac{2w+1}{p}\right)$, giving $\fpt(\fa)\geq \frac{4w+2}{p}+\frac{1}{p}+\frac{1}{3}=1-\frac{1}{3p}$. Summarizing,
$$
	\begin{cases}
		 \fpt(\fa) \geq \frac{5}{6}& \text{if}\quad p=2\\
		\fpt(\fa) \geq \frac{2}{3}& \text{if}\quad p=3\\
		\fpt(\fa)=1& \text{if}\quad p\equiv 1 \mod 6 \\
		\fpt(\fa)\geq 1-\frac{1}{3p}& \text{if}\quad p\equiv 5 \mod 6.
	\end{cases}
	$$ 
\end{example}
\begin{example}\label{Ex:fptequallct}
	Following Example \ref{Ex:NewtPolToMaxFace}, consider the ideal $\fa$ generated by
	\[\Bf=(y^{5}(x+y^m)^4,x^n),\quad \text{with}\quad m\geq 1\quad \text{and}\quad n > 4+5/m.
	\]
	The unique maximal point of $\SP_{\Bf}$ is $\{(1/5,0,0,0,0,1/5n)\}$. It is not difficult to show that for any prime $p$, $S_1$ and $S_2$ in Theorem \ref{Thm:F-pure-threshold-polytope} are both infinite. Therefore $\fpt(\fa)=\frac{1}{5}+\frac{1}{5n}$ which coincides with  $\lct((y^{5}(x+y^m)^4,x^n))$.
\end{example}

\subsection{Relations with the log canonical threshold}\label{SSec:Relations}
Let $\mathfrak{a}$ be an ideal in  $\QQ[x_1,\ldots, x_m]$ and consider a log resolution of $\fa$ defined over $\QQ$. We mean by this, a proper birational map $\pi: Y \to \mathbb{A}^m$, with $Y$ smooth, and such that $\fa\cdot \mathcal{O}_{Y}=\mathcal{O}_{Y}(-D)$ is invertible and $\operatorname{Exc}(\pi) \cup \Supp(D)$ is a simple normal crossing divisor. Denote by $K_\pi$ the relative canonical divisor on $Y$ and set $D$ for the effective divisor given by $\pi^{-1}(\fa)$, then the \textit{log canonical threshold of} $\fa$ (at the origin) is given by
\[\lct(\fa) = \sup\{\alpha \in \RR_{>0} \mid  \lceil K_\pi -\alpha D\rceil \,\text{ is effective}\},
\]
where $\lceil H\rceil$ denotes the round up divisor of $H$. Given a prime number $p$, one may consider the reduction $\fa_p=\fa\cdot\FF_p[x_1,\ldots, x_m]_{(x_1,\ldots,x_m)}$ of $\fa$ mod $p$, and then it is very natural to ask about the relationship among the corresponding thresholds. Following Hara and Yosida~\cite{HaYo}, Musta\c{t}\v{a}, Tagaki, and Watanabe \cite[Theorem 3.3]{MTW} observe that for $p$ large enough, one has
\begin{equation}\label{Eq:ResultsMTW}
	\fpt(\fa_p)\leq \lct(\fa) \quad \text{and}\quad \lim_{p\to \infty} \fpt(\fa_p)=\lct(\fa).
\end{equation} 
The authors also posed the following conjecture which still represents an interesting open challenge. For historical comments and some references of the proven cases consult the survey \cite{BFS2013}.
\begin{conjecture}[{\cite[Conjecture 3.6]{MTW}}]\label{Conj: fptandlct}
	There are infinitely many primes $p$ for which $\fpt(\fa_p) = \lct(\fa)$.
\end{conjecture}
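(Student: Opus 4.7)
The plan is to combine the lower bound for $\fpt(\fa_p)$ supplied by Theorem~\ref{Thm:F-pure-threshold-polytope} with the general comparisons $\fpt(\fa_p)\le\lct(\fa)$ and $\lim_{p\to\infty}\fpt(\fa_p)=\lct(\fa)$ recalled in \eqref{Eq:ResultsMTW}. Two ``universal'' identities will drive the argument: first $\lct(\fa^\circ)=|\rho|$, obtained from Example~\ref{Ex:Fthrsholds} together with Proposition~\ref{Prop:Properties ftt ideals}(2); and second $\lct(\fa)\le\lct(\fa^\circ)=|\rho|$, which follows from $\fa\subseteq\fa^\circ$ and the standard monotonicity of the log canonical threshold. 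Observe also that $\rho$ is automatically rational, since $\SP_{\Bf}$ is cut out by linear inequalities with integer coefficients; thus each $\rho_k$ belongs to $\QQ^{l_k}\cap[0,1]^{l_k}$ and Lemma~\ref{lema:lemma8} applies.

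For part (1), I would first deduce that $|\rho_k|>1$ for every $k\in\{1,\ldots,t\}$: from $|\rho|=\lct(\fa^\circ)>t$ and the fact that the decomposition has exactly $t$ summands, pigeonhole forces some $|\rho_{k_0}|>1$, and the standing property propagates this to all $k$. Applying Lemma~\ref{lema:lemma8}(2) to each $\rho_k$, for $p$ large enough (uniformly in the finitely many indices $k$) one has $\sum_{j=1}^{l_k}\rho_{k,j}^{(1)}\ge p$, so $S_k=0$. Since $\langle\rho_k\rangle_0=0$, Theorem~\ref{Thm:F-pure-threshold-polytope}(2) then yields
\[
\fpt(\fa_p)\;\ge\;\sum_{k=1}^{t}\Bigl(|\langle\rho_k\rangle_{0}|+\tfrac{1}{p^{0}}\Bigr)\;=\;t.
\]
The reverse inequality $\fpt(\fa_p)\le t$ comes from Proposition~\ref{Prop:Properties ftt ideals}(1), so $\fpt(\fa_p)=t$ for all $p\gg 0$. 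Taking the limit in \eqref{Eq:ResultsMTW} forces $\lct(\fa)=t$, and the conclusion $\fpt(\fa_p)=\lct(\fa)=t$ holds on a cofinite set of primes.

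For part (2) the dual pigeonhole gives $|\rho_k|\le 1$ for every $k$: the condition $|\rho|\le t$ rules out having all $|\rho_k|>1$, so the standing property propagates the existence of one small $|\rho_{k_0}|\le 1$ to every index. For any prime $p$ in the assumed infinite set of primes where every $\rho_k$ adds without carrying, the definition of $S_k$ yields $S_k=\infty$, so $I=\emptyset$ and Theorem~\ref{Thm:F-pure-threshold-polytope}(1) gives $\fpt(\fa_p)=\fpt(\fa_p^\circ)=|\rho|$. Restricting further to the cofinite subset where \eqref{Eq:ResultsMTW} applies (still infinite), the sandwich $|\rho|=\fpt(\fa_p)\le\lct(\fa)\le|\rho|$ closes the argument and produces $\fpt(\fa_p)=\lct(\fa)=|\rho|$ on an infinite set of primes.

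The only substantive obstacle is establishing the clean dictionary between the arithmetic invariant $S_k$ (defined via $p$-expansions of $\rho_k$) and the two external inputs: Lemma~\ref{lema:lemma8}(2) must force $S_k=0$ when $|\rho_k|>1$ and $p$ is large, while the carrying-free hypothesis must force $S_k=\infty$. Once this bridge is in place, both parts of the theorem reduce to the lower bound of Theorem~\ref{Thm:F-pure-threshold-polytope} sandwiched against the inequalities of \eqref{Eq:ResultsMTW}. The minor technical points are the uniform choice of $p$ across the finitely many indices $k$ in part~(1), and the fact that intersecting the carrying-free primes with those where \eqref{Eq:ResultsMTW} applies preserves infinitude in part~(2); both are harmless finiteness checks.
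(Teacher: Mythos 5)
The statement you are proving is Conjecture~\ref{Conj: fptandlct}, which is stated for an \emph{arbitrary} nonzero ideal $\fa$ and which the paper explicitly presents as an open problem (it is \cite[Conjecture 3.6]{MTW}; the paper offers only partial evidence). Your argument, however, silently imports the full hypothesis package of Theorem~\ref{Thm:fpt-equals-lct}: that $\SP_{\Bf}$ has a \emph{unique} maximal point $\rho$, that the quantities $|\rho_k|$ are either all $>1$ or all $\leq 1$ (the ``standing property'' you invoke is an assumption, not something you derive), that $\lct(\fa^\circ)$ compares with $t$ in the matching way, and, in your part (2), that there are infinitely many primes for which \emph{every} $\rho_k$ simultaneously adds without carrying. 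What you have written is therefore essentially the paper's proof of Theorem~\ref{Thm:fpt-equals-lct} (same bridge between $S_k$ and Lemma~\ref{lema:lemma8}, same sandwich against \eqref{Eq:ResultsMTW} and Proposition~\ref{Prop:Properties ftt ideals}), not a proof of the conjecture.

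The gap is that none of these hypotheses is available for a general $\fa$, and they are genuinely restrictive. The maximal face of $\SP_{\Bf}$ need not be a point; even when it is, the indices can be mixed (some $|\rho_k|>1$, some $\leq 1$), in which case neither of your two cases applies and Theorem~\ref{Thm:F-pure-threshold-polytope} only yields a lower bound strictly below $\lct(\fa)$ for the relevant primes. Moreover, in part (2) the infinitude of the set $P_\rho^\infty$ is itself an unproved arithmetic assumption: Lemma~\ref{lema:lemma8}(1) guarantees infinitely many carrying-free primes for each $\rho_k$ \emph{separately}, but says nothing about the intersection over $k$, which is exactly why the paper states it as a hypothesis of Theorem~\ref{Thm:fpt-equals-lct} rather than deducing it. (You also use $\fpt(\fa_p^\circ)=\lct(\fa^\circ)=|\rho|$, which needs $p\in P_\fa$ so that the Newton polyhedron, hence the term ideal, is preserved under reduction; for $p\gg 0$ this is harmless but should be said.) Within the scope of Theorem~\ref{Thm:fpt-equals-lct} your reasoning is sound and matches the paper's, but as a proof of Conjecture~\ref{Conj: fptandlct} it establishes only special cases, and the conjecture remains open.
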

\begin{example}\label{Ex:ContExFptIdeal}
	Following Example \ref{Example fptIdeal}, let $\fa$ be the ideal generated by $\Bf=(x^2+xy^2,yz^3)$. It is not difficult to show that $\lct(\fa)=1$ which coincides with $\fpt(\fa)$ for the primes $p$ verifying $p \equiv 1 \mod 6$.
\end{example}

\begin{notation}\label{Notation}
	\begin{enumerate}
		\item For a given ideal $\fa$, we denote by $P_\fa$ the set of prime numbers for which  $\N_{\fa}=\N_{\fa_p}$.
		\item For a given vector $\rho=(\rho_1, \ldots, \rho_t)\in \RR^N$ having at least one index $i$ with $|\rho_i|>1$, assume (reordering if necessary) that for $k\in \{1,\ldots,r\}$, $|\rho_k|>1$. Denote by $P_\rho^0$ the set of prime numbers satisfying the condition 
		\begin{equation}\label{Eq:Condition0}
			\rho_{k,1}^{(1)}+ \cdots + \rho_{k,l_{k}}^{(1)} \geq p
		\end{equation} 
		for every $k\in\{1,\ldots,r\}$, c.f. Lemma~\ref{lema:lemma8}.
		\item For $\rho=(\rho_1, \ldots, \rho_t)\in \RR^N$ we denote by $P_\rho^\infty$ the set of prime numbers satisfying the condition that $\rho_k$ adds without carrying for every $k\in\{1,\ldots,t\}$.
	\end{enumerate}
\end{notation}
Note that condition~\ref{Eq:Condition0} holds for a sufficiently large prime number $p$, more precisely it holds for $p\geq \frac{l_k}{|\rho_k|-1}$. 

\begin{theorem}\label{Thm:fpt-equals-lct}
	Let $\fa \subseteq \m$ be a nonzero ideal with term ideal $\fa^\circ$, and take a list $\Bf=(f_1,\ldots,f_t)$  of minimal generators of $\fa$. Suppose that $\SP_{\Bf}$ has a unique maximal point $\rho=(\rho_1, \ldots, \rho_t)$ with $\rho_i \in \RR^{l_i}$. 
	\begin{enumerate}
		\item Assume that $\rho$ verifies the following property: whenever there is one $k_0\in\{1,\ldots,t\}$ with $|\rho_{k_0}|>1$, then $|\rho_k|>1$ for every $k\in\{1,\ldots,t\}$. If $\lct(\fa^\circ)>t$, then $\fpt(\fa_p)=\lct(\fa)=t$ for big enough primes in  $P_\rho^0\cap P_\fa$. 
		
		\item Assume that $\rho$ verifies the following property: whenever there is one $k_0\in\{1,\ldots,t\}$ with $|\rho_{k_0}|\leq 1$, then $|\rho_k|\leq 1$ for every $k\in\{1,\ldots,t\}$. Assume moreover that $P_\rho^\infty$ is an infinite set. If $\lct(\fa^\circ)\leq t$, then $\fpt(\fa_p)=\lct(\fa)$ for big enough primes in  $P_\rho^\infty\cap P_\fa$.
	\end{enumerate}	
\end{theorem}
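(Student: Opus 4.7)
The plan is to reduce both statements to direct applications of Theorem~\ref{Thm:F-pure-threshold-polytope} combined with the fundamental inequality $\fpt(\fa_p)\leq\lct(\fa)$ of \eqref{Eq:ResultsMTW} and the monomial identity $\lct(\fa^\circ)=\fpt(\fa_p^\circ)=|\rho|$ coming from Example~\ref{Ex:Fthrsholds} and Proposition~\ref{Prop:Properties ftt ideals}(2). The standing hypotheses on the coordinate sums $|\rho_k|$ impose a clean dichotomy: either all $|\rho_k|>1$, which in part~(1) is forced by $\lct(\fa^\circ)=|\rho|>t$ (ruling out the case where every $|\rho_k|\leq 1$), or all $|\rho_k|\leq 1$, which in part~(2) is forced by $\lct(\fa^\circ)=|\rho|\leq t$ (ruling out the case where every $|\rho_k|>1$). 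For primes $p\in P_\fa$ the Newton polyhedron, and hence the splitting polytope together with its unique maximal point $\rho$, are preserved under reduction, so Theorem~\ref{Thm:F-pure-threshold-polytope} applies verbatim to $\fa_p$.

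For part~(1), the conclusion ``all $|\rho_k|>1$'' together with Lemma~\ref{lema:lemma8}(2) yields $\rho_{k,1}^{(1)}+\cdots+\rho_{k,l_k}^{(1)}\geq p$ for every $k$ as soon as $p\geq l_k/(|\rho_k|-1)$, showing that $P_\rho^0$ is cofinite. For $p\in P_\rho^0\cap P_\fa$ one has $S_k=0$ and $\langle\rho_k\rangle_{0}=0$ for every $k$, so $I=\{1,\ldots,t\}$ and Theorem~\ref{Thm:F-pure-threshold-polytope}(2) gives
\[
\fpt(\fa_p)\;\geq\;\sum_{i=1}^{t}\left(|\langle\rho_i\rangle_{0}|+\frac{1}{p^{0}}\right)\;=\;t.
\]
Proposition~\ref{Prop:Properties ftt ideals}(1) supplies the matching upper bound $\fpt(\fa_p)\leq t$. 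Passing to the limit as $p\to\infty$ in $\fpt(\fa_p)\leq t$ and using \eqref{Eq:ResultsMTW} gives $\lct(\fa)\leq t$ as well, so $\fpt(\fa_p)=\lct(\fa)=t$ for every sufficiently large prime in $P_\rho^0\cap P_\fa$.

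For part~(2), the assumption $p\in P_\rho^\infty$ says exactly that each $\rho_k$ adds without carrying modulo~$p$, which by Definition~\ref{Def:S-i} translates to $S_k=\infty$ for every $k$; hence $I=\emptyset$ and Theorem~\ref{Thm:F-pure-threshold-polytope}(1), applied to $\fa_p$ for $p\in P_\rho^\infty\cap P_\fa$, yields $\fpt(\fa_p)=\fpt(\fa_p^\circ)=|\rho|$. The sandwich $\fpt(\fa_p)\leq\lct(\fa)\leq\lct(\fa^\circ)=|\rho|$ collapses all three quantities to $|\rho|$, giving $\fpt(\fa_p)=\lct(\fa)$. The intersection $P_\rho^\infty\cap P_\fa$ is infinite because $P_\rho^\infty$ is infinite by hypothesis and $P_\fa$ is cofinite.

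The anticipated main obstacle is conceptual rather than computational: one must verify that the combinatorial data powering Theorem~\ref{Thm:F-pure-threshold-polytope}---the list of minimal generators, the exponent matrix $\E_\Bf$, the uniqueness of the maximal point $\rho$, and consequently the invariants $S_i$---transfer faithfully to $\fa_p$ whenever $p\in P_\fa$. Once that transfer is in place, the proof reduces to the dichotomy argument above, with Theorem~\ref{Thm:F-pure-threshold-polytope} producing the lower bound on $\fpt(\fa_p)$ and the elementary identity $\lct(\fa^\circ)=|\rho|$ producing the matching upper bound through the chain $\fpt(\fa_p)\leq\lct(\fa)\leq\lct(\fa^\circ)$.
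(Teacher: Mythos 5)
Your proposal is correct and follows essentially the same route as the paper: force the dichotomy on the $|\rho_k|$ from $\lct(\fa^\circ)=|\rho|$, deduce $S_1=\cdots=S_t=0$ (resp.\ $=\infty$) for primes in $P_\rho^0\cap P_\fa$ (resp.\ $P_\rho^\infty\cap P_\fa$), apply Theorem~\ref{Thm:F-pure-threshold-polytope}, and close with \eqref{Eq:ResultsMTW} and the sandwich through $\lct(\fa^\circ)$. The only point to make explicit in part~(1) is the reverse inequality $t=\fpt(\fa_p)\leq\lct(\fa)$ (the first inequality of \eqref{Eq:ResultsMTW} for large $p$), since $\fpt(\fa_p)=t$ together with $\lct(\fa)\leq t$ alone does not yet give $\lct(\fa)=t$; this is exactly how the paper concludes, whereas it is left implicit in your limit argument.
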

\begin{proof}
	Since $\fa_p^\circ$ is a monomial ideal it follows from Proposition~\ref{Prop:Properties ftt ideals}(2) and Example~\ref{Ex:Fthrsholds} that 
	$|\rho|=\fpt(\fa_p^\circ)=\lct(\fa_p^\circ)$. To prove the first part of the theorem, note that for $p\in P_\fa$, the equality $\N_{\fa}=\N_{\fa_p}$ guarantees that the log canonical threshold of the corresponding term ideals are equal, yielding
	\begin{equation}\label{Eq:EqualityFPTs}
		|\rho|=\fpt(\fa_p^\circ)=\lct(\fa_p^\circ)=\lct(\fa^\circ).
	\end{equation}
	From the hypothesis $\lct(\fa^\circ)>t$, there should be at least one $k_0\in\{1,\ldots,t\}$ with $|\rho_{k_0}|>1$, which also implies that $|\rho_k|>1$ for every $k\in\{1,\ldots,t\}$. Now we use Lemma~\ref{lema:lemma8}(2) to conclude that $\rho_{k,1}^{(1)}+ \cdots + \rho_{k,l_{k}}^{(1)} \geq p$, for $p\in P_\rho^0\cap P_\fa$, meaning that $S_{k}=0$ (recall Definition~\ref{Def:S-i}). Summarizing, we have shown that $S_1=\cdots=S_t=0$, and then Theorem~\ref{Thm:F-pure-threshold-polytope}(2) gives
	\begin{equation*}\label{Eq:InequalityFPT} 
		t\leq \fpt(\fa_p).
	\end{equation*}
	In fact this inequality is an equality thanks to property $\fpt(\fa_p) \leq t$ from Proposition~\ref{Prop:Properties ftt ideals}(1). Finally note that for $p$ big enough in $P_\rho^0\cap P_\fa$ we have, see \eqref{Eq:ResultsMTW}, 
		\[t=\fpt(\fa_p)\leq \lct(\fa).\]
	The conclusion then follows from the well known inequality $\lct(\fa) \leq t$.

	For the second part of the theorem we observe that  $\lct(\fa^\circ)\leq t$, implies that $|\rho_1|+\cdots+|\rho_t|\leq t$. Then there should be an index $k_0\in\{1,\ldots,t\}$ for which $|\rho_{k_0}|\leq 1$, which in turn implies $|\rho_k|\leq 1$ for every $k\in\{1,\ldots,t\}$. Lemma~\ref{lema:lemma8}(1) implies that $\rho_{k}$ adds without carrying for infinitely many $p$. Now we use the fact that 
	$P_\rho^\infty$ is an infinite set, in particular this is saying that the intersection of the sets of primes in which each $\rho_{k}$ adds without carrying is non-empty and infinite. We conclude that $S_1=\cdots=S_t=\infty$ for any prime in $P_\rho^\infty$. This time we use Theorem~\ref{Thm:F-pure-threshold-polytope}(1) to conclude $\fpt(\fa_p)=\fpt(\fa_p^\circ)$. Note that for $p$ big enough in $P_\rho^\infty\cap P_\fa$ we have $\fpt(\fa_p)\leq \lct(\fa)$ from the left hand side of \eqref{Eq:ResultsMTW}, implying 
	$$\fpt(\fa_p)\leq \lct(\fa) \leq \lct(\fa^\circ).$$ Finally we may complete the chain of inequalities as $$\fpt(\fa_p^\circ)=\fpt(\fa_p)\leq \lct(\fa) \leq \lct(\fa^\circ)=\fpt(\fa_p^\circ),$$ finishing this way the proof.  
\end{proof}
\begin{example}\label{Ex:CondPrimes}
	Let $\fa$ be the ideal of $\QQ[x_1,\ldots,x_6]$ generated by $\Bf=(x_1^{a}+x_2^{b}+x_3^{c},x_4^{a}+x_5^{b}+x_6^{c})$. Then
	$\N_{\fa}$ has just one compact face with equation 
	\[\frac{x_1}{a}+\frac{x_2}{b}+\frac{x_3}{c}+\frac{x_4}{a}+\frac{x_5}{b}+\frac{x_6}{c}=1,
	\]
	and thanks to Proposition~\ref{Prop:SplPolvsNewPol}, $\SP_{\Bf}$ has a unique maximal point with coordinates $\rho=(\frac{1}{a},\frac{1}{b},\frac{1}{c},\frac{1}{a},\frac{1}{b},\frac{1}{c})$. In this case $|\rho_1|=|\rho_2|=\frac{1}{a}+\frac{1}{b}+\frac{1}{c}$ and Example~\ref{Ex:Fthrsholds} gives 
	\[\lct(\fa^\circ)=2\left(\frac{1}{a}+\frac{1}{b}+\frac{1}{c}\right).
	\]
	If we assume that $\lct(\fa^\circ)>2$, then the first part of Theorem~\ref{Thm:fpt-equals-lct} gives $\fpt(\fa_p)=\lct(\fa)=2$ for any prime $p$ large enough (in this case $P_\rho^0\cap P_\fa$ consists of the primes $p>3$).
	
	Now assume that $\lct(\fa^\circ)\leq 2$, then $|\rho_1|=|\rho_2|\leq 1$. The second part of Theorem~\ref{Thm:fpt-equals-lct} yields 
	$\fpt(\fa_p)=\lct(\fa)=2\left(\frac{1}{a}+\frac{1}{b}+\frac{1}{c}\right),$ for big enough primes in $P_\rho^\infty\cap P_\fa$. From Example~\ref{Ex:pExpansion} one member of $P_\rho^\infty$ is a prime $p$ verifying $p \equiv 1 \mod abc$, provided that $ab+bc+ac\leq abc$.
	
\end{example}

Some approaches to the algorithmic calculus of $F$-pure thresholds are possible. In this direction, the only software that we are aware of, is the  $\mathrm{Macaulay2}$ package \textit{FrobeniusThresholds} \cite{MacFrobThre}, designed to estimate and calculate $F$-pure thresholds in several cases. This package incorporates some of the techniques for principal ideals given in \cite{HDbinomial,HDdiagonal,HDpolynomials}. 
Another algorithmic approach to the explicit calculation of $F$-pure thresholds (and log canonical thresholds) is considered by Shibuta and Takagi \cite{ShTa}, where they proved that $\lct(\fa)$ is computable by linear programming for several classes of binomial ideals. The following example is taken from them. The general case of binomial ideals is treated in \cite{BlaEn}.

\begin{example}[Space Monomial Curves]\label{Exam:SpaceMonCurves}
	Take $R=\FF_p[x,y,z]$ and let $\fa$ be the ideal generated by $\Bf=(x^a-y^b,z^c-x^dy^e)$, with $a,b,c\geq 1$, $d\geq e\geq 0$ and  $ae+bd\geq ab$. Then $\SP_{\Bf}=\{(\gamma_1,\gamma_2,\gamma_3,\gamma_4)\in \RR_{\geq 0}^4 \mid a\gamma_1+d\gamma_4 \leq 1, b\gamma_2+e\gamma_4 \leq 1, \text{ and } c\gamma_3 \leq 1 \}$, while $\N_{\fa}$ has just one compact face which is the convex hull of $(a,0,0)$, $(0,b,0)$ and $(0,0,c)$. Proposition \ref{Prop:maximal face} guarantees that the unique maximal face of $\SP_{\Bf}$ is $\rho=(\frac{1}{a},\frac{1}{b},\frac{1}{c},0)$. For instance, if 
	$a+b\leq ab$ then the second part of Theorem~\ref{Thm:fpt-equals-lct} implies that $\fpt(\fa_p)=\lct(\fa)=|\rho|$ for large enough primes in $P_\rho^\infty\cap P_\fa$. Again Example~\ref{Ex:pExpansion} provides an element of $P_\rho^\infty$.
\end{example}

\section{Mixed Generalized Test Ideals and $F$-Volumes}

In this section we present a final application of our construction of the splitting polytope of a polynomial mapping. We first introduce the
definition of the $F$-volume of a collection of ideals $\mathfrak{a}_1, \ldots, \mathfrak{a}_t$ and some of its properties. Then we use the proof of Theorem \ref{Thm:F-pure-threshold-polytope} for the construction of a lower bound for this volume.

Test ideals were introduced by Hochster and Huneke \cite{HoHu90,HoHu94} as the characteristic $p$ analogues of multiplier ideals. We follow the  description of Bickle, Musta{\c{t}}{\u{a}}, and Smith \cite{BMS-MMJ} and consider $R$ a polynomial ring over a field $\mathbb{k}$ of prime characteristic $p$. Assume further that $R$ is $F$-finite, meaning that the Frobenius map is finite. 
\begin{definition}
	\begin{enumerate}
		\item Given an ideal $\mathfrak{b} \subseteq R$, the $p^{e}$-th \textit{Frobenius root} of $\mathfrak{b}$, denoted $\mathfrak{b}^{[1/p^{e}]}$, is the smallest ideal $\fa$ of $R$ such that $\mathfrak{b} \subseteq \fa^{[p^{e}]}$. 
		\item Given a collection $\mathfrak{a}_1, \ldots, \mathfrak{a}_t$ of ideals in $R$ and $c=(c_1,\ldots,c_t) \in \mathbb{R}^{t}_{\geq 0}$, the \textit{mixed generalized test ideal} with exponents $c_1,  \ldots, c_t$ is defined as
		\[
		\tau(\mathfrak{a}_1^{c_1}\cdots \fa_{t}^{c_t})= \bigcup_{e>0} (\mathfrak{a}_1^{\lceil c_1p^{e} \rceil}\cdots \mathfrak{a}_t^{\lceil c_tp^{e} \rceil})^{[1/p^{e}]}.
		\]
	\end{enumerate} 
\end{definition} 
Since $R$ is Noetherian, the ascending family of ideals $\set{(\mathfrak{a_1}^{\lceil c_1p^{e} \rceil}\cdots \mathfrak{a_t}^{\lceil c_tp^{e} \rceil})^{[1/p^{e}]}}_{e>0}$ stabilizes, therefore $\tau(\mathfrak{a}_1^{c_1}\cdots \fa_{t}^{c_t})= (\mathfrak{a_1}^{\lceil c_1p^{e} \rceil}\cdots \mathfrak{a_t}^{\lceil c_tp^{e} \rceil})^{[1/p^{e}]}$ for $e$ big enough. Note that for a given collection of ideals $\mathfrak{a}_1, \ldots, \mathfrak{a}_t$ and $c\in \RR_{\geq 0}^t$, there always exist points $d\in \RR_{\geq 0}^t$, other than $c$ such that $\tau(\mathfrak{a}_1^{c_1}\cdots \fa_{t}^{c_t})=\tau(\mathfrak{a}_1^{d_1}\cdots \fa_{t}^{d_t})$. The subset of $\RR_{\geq 0}^t$ consisting of all such $d$ is called the \textit{constancy region} of $\tau(\mathfrak{a}_1^{c_1}\cdots \fa_{t}^{c_t})$, see for instance \cite{HaYo,TakagiAdj,BMS-MMJ,Pe2013}. The measure of a related set in $\RR_{\geq 0}^t$ was introduced under the name of $F$-volumes by  Badilla-C\'{e}spedes, N\'{u}\~{n}ez-Betancourt and Rodr\'{i}guez-Villalobos \cite{BCNBRV}.

\begin{definition}\label{def1}
	Consider a collection $\mathfrak{a}_1, \ldots, \mathfrak{a}_t, \fb$ of ideals in $R$ and assume that for each $i\in\{1,\ldots,t\}$, $\fa_i\subseteq \sqrt{\fb}$. Given $e\in \mathbb{Z}_{\geq 0}$ we define
	\[
	\V_{\mathfrak{a}_1, \ldots, \mathfrak{a}_t}^{\fb}(p^e)=\{(n_1,\ldots, n_t)\in\mathbb{Z}_{\geq 0}^t\ \mid  
	\fa_1^{n_1}\cdots \fa_t^{n_t}\not \subseteq \fb^{[p^e]}\}.
	\]
\end{definition}
\begin{theorem}[{\cite[Theorem $2.13$]{BCNBRV}}]\label{ThmLimitExistsSec}
	The limit $$\lim_{e\to \infty}\frac{1}{{p^{et}}} \cdot \Card ( \V_{\mathfrak{a}_1, \ldots, \mathfrak{a}_t}^{\fb}(p^{e}))$$
	converges.
\end{theorem}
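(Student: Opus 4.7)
The plan is to approximate $\Card(\V(p^e))/p^{et}$ by the volume of an increasing, uniformly bounded sequence of subsets $\Omega(e) \subseteq \RR_{>0}^t$ and conclude convergence via monotone convergence. Throughout I abbreviate $\V(p^e) = \V_{\fa_1,\ldots,\fa_t}^{\fb}(p^e)$. For the uniform bound, since $\fa_i \subseteq \sqrt{\fb}$, I choose $d_i$ with $\fa_i^{d_i} \subseteq \fb$; as $\fb$ can be generated by $k$ elements, the pigeonhole principle gives $\fb^{k(p^e-1)+1} \subseteq \fb^{[p^e]}$. Hence, once $\sum_i n_i/d_i$ exceeds $k(p^e-1)$, the product $\fa_1^{n_1}\cdots\fa_t^{n_t}$ sits inside $\fb^{[p^e]}$. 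This forces $\V(p^e) \subseteq [0,Cp^e]^t$ for a uniform constant $C$, and so $\Card(\V(p^e))/p^{et}$ is bounded above.

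Next I would establish two order-theoretic properties of $\V(p^e)$. \textbf{Down-closedness}: if $n \in \V(p^e)$ and $0 \leq m \leq n$ componentwise, then $\fa_1^{m_1}\cdots\fa_t^{m_t} \supseteq \fa_1^{n_1}\cdots\fa_t^{n_t} \not\subseteq \fb^{[p^e]}$, so $m \in \V(p^e)$. \textbf{Frobenius compatibility}: if $n \in \V(p^e)$, then $p^{e'}n \in \V(p^{e+e'})$ for every $e' \geq 0$. To prove this, pick $x \in \fa_1^{n_1}\cdots\fa_t^{n_t} \setminus \fb^{[p^e]}$; then $x^{p^{e'}} \in \fa_1^{p^{e'}n_1}\cdots\fa_t^{p^{e'}n_t}$. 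One must show $x^{p^{e'}} \notin \fb^{[p^{e+e'}]}$. A direct analysis using the $R^{p^{e'}}$-basis decomposition $R = \bigoplus_{0\leq a<p^{e'}} x^a R^{p^{e'}}$ shows that every element of $\fb^{[p^{e+e'}]} = (\fb^{[p^e]})^{[p^{e'}]}$ has the form $\sum_a x^a q_a^{p^{e'}}$ with $q_a \in \fb^{[p^e]}$. Matching this with $x^{p^{e'}}$, which lies entirely in the $a=0$ component of the decomposition, together with the injectivity of Frobenius in the reduced ring $R$, would force $x \in \fb^{[p^e]}$, a contradiction.

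Now define $\Omega(e) = \{y \in \RR_{>0}^t \mid \lceil p^e y\rceil \in \V(p^e)\}$. From $p^e y \leq \lceil p^e y\rceil$ one gets $p^{e+e'}y \leq p^{e'}\lceil p^e y\rceil$, whence $\lceil p^{e+e'}y\rceil \leq p^{e'}\lceil p^e y\rceil$; combining with down-closedness and Frobenius compatibility yields $\Omega(e) \subseteq \Omega(e+e')$. Each $n \in \V(p^e) \cap \ZZ_{>0}^t$ contributes the disjoint cube $\prod_i ((n_i-1)/p^e, n_i/p^e]$ of volume $p^{-et}$, so $\Vol(\Omega(e)) = \Card(\V(p^e) \cap \ZZ_{>0}^t)/p^{et}$. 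The points of $\V(p^e)$ lying on some coordinate hyperplane are contained in a union of $t$ slices of the bounding cube $[0,Cp^e]^t$, hence number $O(p^{e(t-1)})$; this gives
\[\left|\Vol(\Omega(e)) - \frac{\Card(\V(p^e))}{p^{et}}\right| = O\!\left(\frac{1}{p^e}\right).\]
The monotone family $\{\Omega(e)\}$ lies inside the bounded cube $[0,C]^t$, so $\Vol(\Omega(e))$ is an increasing bounded real sequence, hence convergent; the displayed estimate transfers the convergence to $\Card(\V(p^e))/p^{et}$.

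The main obstacle I anticipate is the Frobenius compatibility step. While the underlying fact that Frobenius contraction sends $\fb^{[p^{e+e'}]}$ to $\fb^{[p^e]}$ is folklore for regular rings (and a consequence of Kunz's flatness theorem), an elementary verification requires careful bookkeeping in the free $R^{p^{e'}}$-basis decomposition of $R$, together with a basis of $\mathbb{k}$ over $\mathbb{k}^{p^{e'}}$ when $\mathbb{k}$ is not perfect. Once this ingredient is in hand, the passage from ideal combinatorics to lattice counts and finally to the volume of a nested sequence of regions is routine.
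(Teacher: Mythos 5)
Your argument is correct, and it is worth noting that the paper itself gives no proof of this statement: Theorem \ref{ThmLimitExistsSec} is quoted verbatim from the reference [BCNBRV, Theorem 2.13], so there is no internal proof to compare against. Your three ingredients are exactly the right ones and are each sound: (i) the uniform bound $\V_{\fa_1,\ldots,\fa_t}^{\fb}(p^e)\subseteq[0,Cp^e]^t$ via $\fa_i^{d_i}\subseteq\fb$ and the pigeonhole containment $\fb^{k(p^e-1)+1}\subseteq\fb^{[p^e]}$; (ii) down-closedness; and (iii) the Frobenius compatibility $n\in\V(p^e)\Rightarrow p^{e'}n\in\V(p^{e+e'})$, which reduces to the standard contraction property $x^{p^{e'}}\in\fb^{[p^{e+e'}]}\Rightarrow x\in\fb^{[p^e]}$, valid here because $R$ is a polynomial ring (flatness of Frobenius, or your explicit free-basis bookkeeping over $R^{p^{e'}}$ with a basis of $\mathbb{k}$ over $\mathbb{k}^{p^{e'}}$, using that $R$ is $F$-finite and reduced). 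The packaging through the nested elementary sets $\Omega(e)=\{y\in\RR_{>0}^t\mid\lceil p^e y\rceil\in\V(p^e)\}$, whose volumes equal $\Card(\V(p^e)\cap\ZZ_{>0}^t)/p^{et}$, together with the $O(p^{e(t-1)})$ count of lattice points on coordinate hyperplanes, correctly transfers monotone convergence of $\Vol(\Omega(e))$ to convergence of $\Card(\V(p^e))/p^{et}$; the monotonicity step $\lceil p^{e+e'}y\rceil\preceq p^{e'}\lceil p^e y\rceil$ combined with (ii) and (iii) is verified correctly. Compared with working directly with the counting function (as in the cited source), your geometric reformulation buys a transparent appeal to monotone convergence at the price of the boundary estimate, which you handle; the only degenerate cases worth a sentence in a final write-up are $\fa_i=0$ or $\fb=R$, where $\V(p^e)$ is contained in a hyperplane or is empty and the limit is trivially $0$.
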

The previous limit is called the $F$-volume of $\mathfrak{a}_1, \ldots, \mathfrak{a}_t$ with respect to $\fb$ and it is denoted by $\Vol_F^\fb(\mathfrak{a}_1, \ldots, \mathfrak{a}_t)$. The $F$-volume measures the sum of the volumes of the constancy regions where $\tau(\fa^{n_1}_1 \cdots \fa^{n_t}_t)\not\subseteq \fb$. It is known~\cite{BCNBRV} that  the $F$-volume of $\mathfrak{a}_1, \ldots, \mathfrak{a}_t$ with respect to $\m=(x_1, \ldots, x_m)$ is bounded above by the minimum between $\fpt(\fa_1)\cdots\fpt(\fa_t)$ and the volume of the pyramid in $\RR_{\geq 0}^t$ bounded by the coordinate hyperplanes and the hyperplane defined by $x_1+\ldots+x_t=\fpt(\fa_1+\ldots +\fa_t)$. However, no trivial lower bounds are known for the $F$-volume  of $\mathfrak{a}_1, \ldots, \mathfrak{a}_t$ with respect to $\m$ so we provide some bounds in terms of our splitting polytopes $\SP_{\Bf}$. First we start with an easy calculation.

\begin{proposition} \label{prop-volume-term-ideal}
Let $\Bf=(f_1,\ldots,f_t)$ be a polynomial mapping of elements in $\m$, and take $\fa_1^\circ,\ldots,\fa_t^\circ$ the collection formed by the term ideals of the principal ideals $(f_1),\ldots,(f_t)$. Assume that a point $\gamma \in \SP_{\Bf}$ is written as $\gamma=(\gamma_1, \ldots, \gamma_t)$ with $\gamma_i \in \RR^{l_i}$, then
$$\max_{\gamma \in \SP_{\Bf}}  |\gamma_1| \cdots  |\gamma_t|  \leq \Vol_F^\m(\fa_1^\circ,\ldots,\fa_t^\circ).$$
\end{proposition}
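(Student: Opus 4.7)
The plan is to bound $\Card(\V_{\fa_1^\circ,\ldots,\fa_t^\circ}^\m(p^e))$ from below by exhibiting, for every tuple $(n_1,\ldots,n_t)$ in a suitable axis-parallel box of $\ZZ_{\geq 0}^t$, an explicit monomial of $\prod_i(\fa_i^\circ)^{n_i}$ that is not contained in $\m^{[p^e]}$. The box will have side lengths of the form $(p^e - O(1))|\gamma_i^*|$ at a maximizer $\gamma^*=(\gamma_1^*,\ldots,\gamma_t^*)$ of $|\gamma_1|\cdots|\gamma_t|$ on $\SP_{\Bf}$, so that after dividing by $p^{et}$ and taking $e\to\infty$ the count tends to $|\gamma_1^*|\cdots|\gamma_t^*|$. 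Tuples with some $|\gamma_i^*|=0$ give a trivial bound, so it suffices to treat the case where $|\gamma_i^*|>0$ for every $i$.

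The candidate monomials will be assembled from the columns $a_1(i),\ldots,a_{l_i}(i)$ of $\tE_i$. By construction these columns lie in $\Supp(f_i)$ (even though some monomials of $f_i$ are discarded when passing to $\hat{f}_i$), so each $x^{a_j(i)}$ is a generator of $\fa_i^\circ$. Setting $\mu_{ij}:=\gamma_{ij}^*/|\gamma_i^*|$, a standard integer-rounding argument produces, for any $n_i\in\ZZ_{\geq 0}$, non-negative integers $r_{ij}$ with $\sum_j r_{ij}=n_i$ and $r_{ij}\leq n_i\mu_{ij}+1$; the resulting vector $r_i=(r_{i1},\ldots,r_{i,l_i})\in\ZZ_{\geq 0}^{l_i}$ satisfies
\[
\tE_i r_i \;\preceq\; \frac{n_i}{|\gamma_i^*|}\,\tE_i\gamma_i^* \;+\; M_i,
\]
where $M_i:=\sum_j a_j(i)$ is the sum of the columns of $\tE_i$.

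Set $M:=\sum_i M_i$ and $c:=\lVert M\rVert_\infty + 1$. For any tuple with $0\leq n_i\leq (p^e-c)|\gamma_i^*|$ for every $i$, summing the previous estimate and using the defining inequality $\sum_i\tE_i\gamma_i^*\preceq\mathbf{1}_m$ of $\SP_{\Bf}$ yields
\[
\sum_i\tE_i r_i \;\preceq\; (p^e-c)\mathbf{1}_m + M \;\preceq\; (p^e-1)\mathbf{1}_m.
\]
Consequently the monomial $x^{\sum_i\tE_i r_i}=\prod_i\prod_j(x^{a_j(i)})^{r_{ij}}$ lies in $\prod_i(\fa_i^\circ)^{n_i}$ but strictly avoids $\m^{[p^e]}$, so $(n_1,\ldots,n_t)\in\V_{\fa_1^\circ,\ldots,\fa_t^\circ}^\m(p^e)$. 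Counting these tuples gives $\Card(\V_{\fa_1^\circ,\ldots,\fa_t^\circ}^\m(p^e))\geq\prod_i\lfloor(p^e-c)|\gamma_i^*|\rfloor$, and dividing by $p^{et}$ and letting $e\to\infty$ yields $|\gamma_1^*|\cdots|\gamma_t^*|\leq\Vol_F^\m(\fa_1^\circ,\ldots,\fa_t^\circ)$, which is the desired inequality since $\gamma^*$ was arbitrary among maximizers.

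The only delicate point is forcing the strict inequality $\sum_i\tE_i r_i\prec p^e\mathbf{1}_m$ (needed to escape $\m^{[p^e]}$) rather than the weaker $\preceq p^e\mathbf{1}_m$ produced naively by the real-to-integer rounding. The constant shift by $c$ handles this: it absorbs the $O(1)$ rounding error uniformly in $e$ and $(n_1,\ldots,n_t)$, and it disappears in the density limit. This is the main, and essentially the only, obstacle in the argument.
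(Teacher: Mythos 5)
Your argument is correct, and it reaches the bound by a genuinely different mechanism than the paper. The paper fixes an arbitrary $\gamma\in\SP_{\Bf}$ and uses the base-$p$ truncations $\langle\gamma_i\rangle_e$: the single monomial $x^{p^e\E_{\Bf}\langle\gamma\rangle_e}$, whose exponent is $\prec p^e\mathbf{1}_m$, shows that the one tuple $(p^e|\langle\gamma_1\rangle_e|,\ldots,p^e|\langle\gamma_t\rangle_e|)$ lies in $\V_{\fa_1^\circ,\ldots,\fa_t^\circ}^{\m}(p^e)$, and the cardinality bound $p^{et}\prod_i|\langle\gamma_i\rangle_e|\leq\Card(\V_{\fa_1^\circ,\ldots,\fa_t^\circ}^{\m}(p^e))$ then rests (implicitly) on the fact that this set is closed under coordinatewise decrease of the exponents $n_i$. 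You instead work directly at a maximizer $\gamma^*$, replace the $p$-expansion truncations by an integer rounding of the normalized weights $\gamma^*_{ij}/|\gamma^*_i|$, and exhibit an explicit witness monomial for \emph{every} lattice tuple in the box with sides $(p^e-c)|\gamma^*_i|$, the uniform constant $c=\lVert M\rVert_\infty+1$ absorbing the rounding error so that the total exponent stays $\preceq(p^e-1)\mathbf{1}_m$; the key facts you use (columns of $\tE_i$ lie in $\Supp(f_i)$, hence give generators of $\fa_i^\circ$, and $\E_{\Bf}\gamma^*\preceq\mathbf{1}_m$) are the same as in the paper, and your treatment of the degenerate case $|\gamma^*_i|=0$ and of the floor/limit bookkeeping is sound. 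What the paper's route buys is brevity and uniformity with the rest of the article, since the truncation machinery is already in place for Proposition 2.12 and Theorem 3.7; what your route buys is a more elementary and self-contained count that needs neither $p$-expansions nor the monotonicity ("staircase") property of $\V_{\fa_1^\circ,\ldots,\fa_t^\circ}^{\m}(p^e)$, since each tuple in the box comes with its own witness.
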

\begin{proof} 
By definition  $\gamma\in \SP_{\Bf}$ means that $\E_{\Bf} \langle \gamma \rangle_e \prec \E_{\Bf} \gamma \preceq  \textbf{1}_m$, implying  $x^{p^{e}\tE_1 \langle \gamma_1 \rangle_e}  \cdots x^{p^{e}\tE_t \langle \gamma_t \rangle_e} \in (\fa_1^\circ)^{p^{e}|\langle \gamma_1 \rangle_e|} \cdots (\fa_t^\circ)^{p^{e}| \langle \gamma_t \rangle_e|} \setminus \m^{[p^{e}]}$. Therefore   
$(p^{e}|\langle \gamma_1 \rangle_e|, \ldots, p^{e} | \langle \gamma_t \rangle_e|)$ is a point of $\V_{\fa_1^\circ,\ldots,\fa_t^\circ}^{\m}(p^{e})$, and we have the bound
 $$p^{et}\prod_{i=1}^t | \langle \gamma_i \rangle_e| \leq \Card \left( \V_{\fa_1^\circ,\ldots,\fa_t^\circ}^{\m}(p^{e}) \right).$$ 
Dividing by $p^{et}$ and letting $e \rightarrow \infty$ yields $\prod_{i=1}^t | \gamma_i| \leq \Vol_F^{\m}(\fa_1^\circ,\ldots,\fa_t^\circ)$, 
from which the conclusion easily follows.
\end{proof}

\begin{theorem} \label{Thm:volume-polytope}
	Consider a polynomial mapping $\boldsymbol{f}=(f_1,\ldots, f_t)$ of elements in $\m$. Suppose that $\SP_{\Bf}$ has a unique maximal point $\rho=(\rho_1, \ldots, \rho_t)$ with $\rho_i \in \RR^{l_i}$, and set for $i\in\{1,\ldots,t\}$,
	$$S_i=\sup \left\{\ell\in\mathbb{Z}_{\geq 0} \mid \sum_{j=1}^{l_i} \rho_{i,j}^{(e)}  \leq p-1\quad\text{for every}\quad 0 \leq e \leq \ell  \right\}.$$ 
	If $I$ denotes the set of indices for which $S_i$ is finite,  then
	$$ \prod_{i\in I} \left(|\langle \rho_i \rangle_{S_i}|+\frac{1}{p^{S_i}} \right)  \prod_{i\notin I} |\rho_i|\leq \Vol_F^{\m}((f_1),\ldots,(f_t)).$$ 
\end{theorem}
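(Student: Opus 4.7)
The plan is to combine the explicit monomial construction already established in Theorem~\ref{Thm:F-pure-threshold-polytope} with a simple downward closure observation, mirroring the strategy of Proposition~\ref{prop-volume-term-ideal} but now applied to the principal ideals $(f_i)$. First I would record that whenever $(n_1,\ldots,n_t)\in \V^\m_{(f_1),\ldots,(f_t)}(p^e)$ and $0\leq m_i\leq n_i$ for each $i$, then $(m_1,\ldots,m_t)\in \V^\m_{(f_1),\ldots,(f_t)}(p^e)$ as well, because the containment $(f_1)^{m_1}\cdots(f_t)^{m_t}\supseteq (f_1)^{n_1}\cdots(f_t)^{n_t}$ is trivial. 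This yields the combinatorial lower bound
\[
\Card\bigl(\V^\m_{(f_1),\ldots,(f_t)}(p^e)\bigr)\;\geq\;\prod_{i=1}^{t}(n_i+1)
\]
whenever $(n_1,\ldots,n_t)\in \V^\m_{(f_1),\ldots,(f_t)}(p^e)$.

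Next, for $e$ sufficiently large, I would import the explicit tuple constructed inside the proof of Theorem~\ref{Thm:F-pure-threshold-polytope}: setting $S=\max_{i\in I}S_i$ (with the convention $S=0$ and empty products when $I=\emptyset$), Proposition~\ref{propoBinomio}(2) together with Lucas's Lemma guarantee that the product $\prod_{i\in I} f_i^{\,p^{S+e}|v_i|}\cdot \prod_{i\notin I} f_i^{\,p^{S+e}|\langle \rho_i \rangle_{S+e}|}$ lies outside $\m^{[p^{S+e}]}$, where the vectors $v_i$ are built to satisfy $|v_i|=|\langle \rho_i \rangle_{S_i}|+(p^e-1)/p^{S_i+e}$. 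Consequently the tuple $(n_1,\ldots,n_t)$ with $n_i=p^{S+e}|v_i|$ for $i\in I$ and $n_i=p^{S+e}|\langle \rho_i\rangle_{S+e}|$ for $i\notin I$ belongs to $\V^\m_{(f_1),\ldots,(f_t)}(p^{S+e})$; the case $I=\emptyset$ uses instead the simpler construction of Theorem~\ref{Thm:F-pure-threshold-polytope}(1) with exponents $p^e|\langle \rho_i\rangle_e|$.

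Combining the two steps and dividing by $p^{(S+e)t}$,
\[
\frac{\Card\bigl(\V^\m_{(f_1),\ldots,(f_t)}(p^{S+e})\bigr)}{p^{(S+e)t}}\;\geq\;\prod_{i\in I}\Bigl(|v_i|+\tfrac{1}{p^{S+e}}\Bigr)\prod_{i\notin I}\Bigl(|\langle \rho_i \rangle_{S+e}|+\tfrac{1}{p^{S+e}}\Bigr).
\]
Letting $e\to\infty$, the right-hand side converges to $\prod_{i\in I}\bigl(|\langle \rho_i\rangle_{S_i}|+1/p^{S_i}\bigr)\prod_{i\notin I}|\rho_i|$, while the left-hand side converges to $\Vol_F^{\m}((f_1),\ldots,(f_t))$ by Theorem~\ref{ThmLimitExistsSec}, after observing that the sequence $(p^{S+e})_e$ is cofinal in $(p^{e'})_{e'}$ so the subsequential limit coincides with the full limit. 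Since all the heavy lifting (the monomial construction and the non-vanishing mod $p$ of the relevant multinomial coefficient) is already done inside Theorem~\ref{Thm:F-pure-threshold-polytope}, the only genuinely new ingredient is the downward closure observation, and I expect no serious obstacle beyond the bookkeeping of the $+1/p^{S+e}$ terms, which vanish in the limit and therefore do not degrade the bound.
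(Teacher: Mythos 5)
Your proposal is correct and follows essentially the same route as the paper: reuse the polynomials and exponent tuples constructed in the proof of Theorem~\ref{Thm:F-pure-threshold-polytope}, place the corresponding lattice point in $\V^{\m}_{(f_1),\ldots,(f_t)}(p^{S+e})$, count via the downward-closure (box) bound, and pass to the limit using Theorem~\ref{ThmLimitExistsSec}. Your explicit remarks on downward closure and on the cofinality of the subsequence $p^{S+e}$ only make precise steps the paper leaves implicit, so there is no substantive difference.
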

\begin{proof}
	Following the proof of Theorem~\ref{Thm:F-pure-threshold-polytope}(1) we note that for $I=\emptyset$, the polynomial $$g=\prod_{i=1}^{t} f_i^{p^{e}|\langle \rho_i \rangle_e|} \notin \m^{[p^{e}]}.$$
	This implies that $(p^{e}| \langle \rho_1 \rangle_e|, \ldots, p^{e} |\langle \rho_t \rangle_e |)$ belongs to $ \V_{(f_1),\ldots,(f_t)}^{\m}(p^{e})$, and also $$p^{et}\prod_{i=1}^t | \langle \rho_i \rangle_e| \leq \Card \left( \V_{(f_1),\ldots,(f_t)}^{\m}(p^{e}) \right).$$ Dividing by $p^{et}$ and letting $e \rightarrow \infty$ concludes the proof of this case.
	
	We follow the same strategy when $I \neq \emptyset$, but using this time the polynomial 
	$$g=\prod_{i=1}^{r} f_i^{p^{S+e}|v_i|} \cdot \prod_{i =r+1}^t  f_i^{p^{S+e}|\langle \rho_i \rangle_{S+e}|},$$
	from the second part of the proof of Theorem \ref{Thm:F-pure-threshold-polytope}. Therefore 
	 $$p^{(S+e)t} \left(\prod_{i=1}^r | v_i | \cdot \prod_{i=r+1}^{t} |\langle \rho_i \rangle_{S+e}| \right)  \leq \Card \left( \V_{(f_1),\ldots,(f_t)}^{\m}(p^{S+e}) \right).$$ 
	The conclusion is achieved as before, dividing by $p^{(S+e)t}$ and taking the limit.
\end{proof}
\begin{example}
	Take $R=\FF_p[x,y,z]$ and $\Bf=(x^2+xy^2,yz^3)$ as in Example \ref{Example fptIdeal}. Then 
$$
	\begin{cases}
		 \Vol_F^{\m}((x^2+xy^2),(yz^3)) \geq \frac{1}{6}& \text{if}\quad p=2\\
		\Vol_F^{\m}((x^2+xy^2),(yz^3)) \geq \frac{1}{9}& \text{if}\quad p=3\\
		\Vol_F^{\m}((x^2+xy^2),(yz^3))\geq \frac{2}{9}& \text{if}\quad p\equiv 1 \mod 6 \\
		\Vol_F^{\m}((x^2+xy^2),(yz^3))\geq \frac{2}{9}-\frac{1}{9p}& \text{if}\quad p\equiv 5 \mod 6.
	\end{cases}
	$$ 
\end{example}
Of course the inequality of Theorem \ref{Thm:volume-polytope} can be strict as we will show next. This demonstrates the need to continue looking for bounds for the $F$-volume.
\begin{example}\label{equality-fail1}
Let $R=\FF_2[x,y]$ and consider the polynomial mapping $\Bf=(x,x+y^2)$.Then $\Vol_F^{\m}((x),(x+y^2))=3/4$ \cite[Example $2.15$]{BCNBRV}. 
On the other hand, $\SP_{\Bf}$ is exactly $[0,1]\times \left[0,\frac{1}{2}\right]$ with maximal point $(\rho_1,\rho_2)=(1,1/2)$. Since $S_1=S_2=\infty$, we obtain $|\rho_1| \cdot |\rho_2| =1/2$. 
\end{example}
		
\section*{Acknowledgments}
We thank  Manuel Gonz\'{a}lez Villa for inspiring conversations. We thank Luis N\'{u}{\~n}ez-Betancourt for helpful comments and suggestions. We also
thank to an anonymous referee for helpful comments.

\providecommand{\bysame}{\leavevmode\hbox to3em{\hrulefill}\thinspace}
\bigskip

\bibliographystyle{alpha}

\begin{thebibliography}{HNWZ16}
	
	\bibitem[BNR22]{BCNBRV}
	W\'{a}gner {Badilla-C\'{e}spedes}, Luis {N\'{u}{\~n}ez-Betancourt}, and Sandra
	{Rodr\'{\i}guez-Villalobos}.
	\newblock {$F$}-volumes.
	\newblock {\em J. Algebra}, 595:14--37, 2022.
	
	\bibitem[BFS13]{BFS2013}
	Ang\'{e}lica Benito, Eleonore Faber, and Karen~E. Smith.
	\newblock Measuring singularities with {F}robenius: the basics.
	\newblock In {\em Commutative algebra}, pages 57--97. Springer, New York, 2013.
	
	\bibitem[BS15]{BhSi15}
	Bhargav Bhatt and Anurag~K. Singh.
	\newblock The {$F$}-pure threshold of a {C}alabi-{Y}au hypersurface.
	\newblock {\em Math. Ann.}, 362(1-2):551--567, 2015.
	
	\bibitem[BFS02]{BiFuSa}
	Carles {Bivi\`a-Ausina}, Toshizumi Fukui, and Marcelo~Jos\'{e} Saia.
	\newblock Newton filtrations, graded algebras and codimension of non-degenerate
	ideals.
	\newblock {\em Math. Proc. Cambridge Philos. Soc.}, 133(1):55--75, 2002.
	
	\bibitem[BE18]{BlaEn}
	Rocio~Blanco and Santiago~Encinas.
	\newblock A procedure for computing the log canonical threshold of a binomial
	ideal.
	\newblock {\em Manuscripta Math.}, 155(1-2):141--181, 2018.
	
	\bibitem[BMS08]{BMS-MMJ}
	Manuel Blickle, Mircea Musta{\c{t}}{\v{a}}, and Karen~E. Smith.
	\newblock Discreteness and rationality of {$F$}-thresholds.
	\newblock {\em Michigan Math. J.}, 57:43--61, 2008.
	\newblock Special volume in honor of Melvin Hochster.
	
	\bibitem[CNV15]{CNV15}
	Pierrette Cassou-Nogu\`es and Willem Veys.
	\newblock The {N}ewton tree: geometric interpretation and applications to the
	motivic zeta function and the log canonical threshold.
	\newblock {\em Math. Proc. Cambridge Philos. Soc.}, 159(3):481--515, 2015.
	
	\bibitem[GJN22]{GVJVNB}
	Manuel {Gonz\'{a}lez Villa}, Delio {Jaramillo-Velez}, and Luis
	{N\'{u}{\~n}ez-Betancourt}.
	\newblock {$F$}-thresholds and test ideals of {T}hom-{S}ebastiani type
	polynomials.
	\newblock {\em Proc. Amer. Math. Soc.}, 150(9):3739--3755, 2022.
	
	\bibitem[HY03]{HaYo}
	Nobuo Hara and Ken-Ichi Yoshida.
	\newblock A generalization of tight closure and multiplier ideals.
	\newblock {\em Trans. Amer. Math. Soc.}, 355(8):3143--3174 (electronic), 2003.
	
	\bibitem[Her14]{HDbinomial}
	Daniel~J. Hern\'{a}ndez.
	\newblock {$F$}-pure thresholds of binomial hypersurfaces.
	\newblock {\em Proc. Amer. Math. Soc.}, 142(7):2227--2242, 2014.
	
	\bibitem[Her15]{HDdiagonal}
	Daniel~J. Hern\'{a}ndez.
	\newblock {$F$}-invariants of diagonal hypersurfaces.
	\newblock {\em Proc. Amer. Math. Soc.}, 143(1):87--104, 2015.
	
	\bibitem[Her16]{HDpolynomials}
	Daniel~J. Hern\'{a}ndez.
	\newblock {$F$}-purity versus log canonicity for polynomials.
	\newblock {\em Nagoya Math. J.}, 224(1):10--36, 2016.
	
	\bibitem[HNWZ16]{HNBW2016}
	Daniel~J. Hern\'{a}ndez, Luis {N\'{u}{\~n}ez-Betancourt}, Emily~E. Witt, and
	Wenliang Zhang.
	\newblock {$F$}-pure thresholds of homogeneous polynomials.
	\newblock {\em Michigan Math. J.}, 65(1):57--87, 2016.
	
	\bibitem[HSTW21]{MacFrobThre}
	Daniel~J. Hern\'{a}ndez, Karl Schwede, Pedro Teixeira, and Emily~E. Witt.
	\newblock The {F}robenius{T}hresholds package for {M}acaulay2.
	\newblock {\em J. Softw. Algebra Geom.}, 11(1):25--39, 2021.
	
	\bibitem[HH90]{HoHu90}
	Melvin Hochster and Craig Huneke.
	\newblock Tight closure, invariant theory, and the {B}rian\c con-{S}koda
	theorem.
	\newblock {\em J. Amer. Math. Soc.}, 3(1):31--116, 1990.
	
	\bibitem[HH94]{HoHu94}
	Melvin Hochster and Craig Huneke.
	\newblock {$F$}-regularity, test elements, and smooth base change.
	\newblock {\em Trans. Amer. Math. Soc.}, 346(1):1--62, 1994.
	
	\bibitem[How01]{How}
	Jason~A. Howald.
	\newblock Multiplier ideals of monomial ideals.
	\newblock {\em Trans. Amer. Math. Soc.}, 353(7):2665--2671, 2001.
	
	\bibitem[Kou76]{Kou}
	Anatoly~G. Kouchnirenko.
	\newblock Poly\`edres de {N}ewton et nombres de {M}ilnor.
	\newblock {\em Invent. Math.}, 32(1):1--31, 1976.
	
	\bibitem[LVZ13]{LeVeZu}
	Edwin~{Le\'{o}n-Cardenal}, Willem Veys, and Wilson~A. {Z\'{u}{\~n}iga-Galindo}.
	\newblock Poles of {A}rchimedean zeta functions for analytic mappings.
	\newblock {\em J. Lond. Math. Soc. (2)}, 87(1):1--21, 2013.
	
	\bibitem[Luc78]{Lucas}
	Edouard Lucas.
	\newblock Theorie des {F}onctions {N}umeriques {S}implement {P}eriodiques.
	\newblock {\em Amer. J. Math.}, 1(4):289--321, 1878.
	
	\bibitem[MSV14]{MiSiVa}
	Lance~Edward Miller, Anurag~K. Singh, and Matteo Varbaro.
	\newblock The {$F$}-pure threshold of a determinantal ideal.
	\newblock {\em Bull. Braz. Math. Soc. (N.S.)}, 45(4):767--775, 2014.
	
	\bibitem[M{\"u}l18]{Mu18}
	Susanne M{\"u}ller.
	\newblock The {$F$}-pure threshold of quasi-homogeneous polynomials.
	\newblock {\em J. Pure Appl. Algebra}, 222(1):75--96, 2018.
	
	\bibitem[MTW05]{MTW}
	Mircea Musta{\c{t}}{\u{a}}, Shunsuke Takagi, and Kei-ichi Watanabe.
	\newblock F-thresholds and {B}ernstein-{S}ato polynomials.
	\newblock In {\em European {C}ongress of {M}athematics}, pages 341--364. Eur.
	Math. Soc., Z\"{u}rich, 2005.
	
	\bibitem[Pag18]{Gil18}
	Gilad Pagi.
	\newblock An elementary computation of the {$F$}-pure threshold of an elliptic
	curve.
	\newblock {\em J. Algebra}, 515:328--343, 2018.
	
	\bibitem[Pag22]{Gil22}
	Gilad Pagi.
	\newblock Legendre polynomials roots and the {$F$}-pure threshold of bivariate
	forms.
	\newblock {\em J. Commut. Algebra}, 14(2):297--308, 2022.
	
	\bibitem[P{\'e}r13]{Pe2013}
	Felipe P{\'e}rez.
	\newblock On the constancy regions for mixed test ideals.
	\newblock {\em J. Algebra}, 396:82--97, 2013.
	
	\bibitem[R{\'e}n57]{Renyi}
	Alfred~R{\'e}nyi.
	\newblock Representations for real numbers and their ergodic properties.
	\newblock {\em Acta Math. Acad. Sci. Hungar.}, 8:477--493, 1957.
	
	\bibitem[Ros15]{Ros}
	Tobias Rossmann.
	\newblock Computing topological zeta functions of groups, algebras, and
	modules, {I}.
	\newblock {\em Proc. Lond. Math. Soc. (3)}, 110(5):1099--1134, 2015.
	
	\bibitem[ST14]{ScTu2014}
	Karl Schwede and Kevin Tucker.
	\newblock Test ideals of non-principal ideals: computations, jumping numbers,
	alterations and division theorems.
	\newblock {\em J. Math. Pures Appl. (9)}, 102(5):891--929, 2014.
	
	\bibitem[ST09]{ShTa}
	Takafumi Shibuta and Shunsuke Takagi.
	\newblock Log canonical thresholds of binomial ideals.
	\newblock {\em Manuscripta Math.}, 130(1):45--61, 2009.
	
	\bibitem[SV23]{SmVra}
	Karen~E. Smith and Adela Vraciu.
	\newblock Values of the {$F$}-pure threshold for homogeneous polynomials.
	\newblock {\em J. Lond. Math. Soc. (2)}, 108(3):1004--1035, 2023.
	
	\bibitem[Ste21]{Ste}
	Jan Stevens.
	\newblock Conjectures on stably {N}ewton degenerate singularities.
	\newblock {\em Arnold Math. J.}, 7(3):441--465, 2021.
	
	\bibitem[Tak04]{TakagiAdj}
	Shunsuke Takagi.
	\newblock F-singularities of pairs and inversion of adjunction of arbitrary
	codimension.
	\newblock {\em Invent. Math.}, 157(1):123--146, 2004.
	
	\bibitem[TW04]{TW2004}
	Shunsuke Takagi and Kei-ichi Watanabe.
	\newblock On {F}-pure thresholds.
	\newblock {\em J. Algebra}, 282(1):278--297, 2004.
	
	\bibitem[TW18]{TaWa2018}
	Shunsuke Takagi and Kei-Ichi Watanabe.
	\newblock {$F$}-singularities: applications of characteristic {$p$} methods to
	singularity theory.
	\newblock {\em Sugaku Expositions}, 31(1):1--42, 2018.
	
	\bibitem[Tei04]{Tei}
	Bernard Teissier.
	\newblock Monomial ideals, binomial ideals, polynomial ideals.
	\newblock In {\em Trends in commutative algebra}, volume~51 of {\em Math. Sci.
		Res. Inst. Publ.}, pages 211--246. Cambridge Univ. Press, Cambridge, 2004.
	
	\bibitem[Tri22]{Tri}
	Vijaylaxmi Trivedi.
	\newblock {$F$}-thresholds {$c^ I({\bf m})$} for projective curves.
	\newblock {\em J. Pure Appl. Algebra}, 226(5):Paper No. 106914, 24, 2022.
	
	\bibitem[VZ08]{VeZu}
	Willem Veys and Wilson~A. {Z\'{u}{\~n}iga-Galindo}.
	\newblock Zeta functions for analytic mappings, log-principalization of ideals,
	and {N}ewton polyhedra.
	\newblock {\em Trans. Amer. Math. Soc.}, 360(4):2205--2227, 2008.
	
\end{thebibliography}

\end{document}